\newtheorem{thm}{Theorem}[section]
\newtheorem{lem}[thm]{Lemma}
\newtheorem{prop}[thm]{Proposition}
\newtheorem{cor}[thm]{Corollary}
\theoremstyle{definition}
\newtheorem{defin}[thm]{Definition}
\newtheorem*{xrem}{Remark}
\newtheorem{mainthm}[thm]{Main Theorem}
\numberwithin{equation}{section}
\begin{document}
\baselineskip=17pt
\title{A second-order identity for the Riemann tensor and applications}
\author{Carlo Alberto Mantica and 
Luca Guido Molinari\\
Physics Department, Universit\'a degli Studi di Milano\\
Via Celoria 16, 20133 Milano, Italy\\
E-mail: luca.molinari@mi.infn.it}
\date{June 2009}
\maketitle
\renewcommand{\thefootnote}{}
\footnote{2000 \emph{Mathematics Subject Classification}:
Local Riemannian geometry 53B20, Methods of Riemannian Geometry 53B21.}
\footnote{\emph{Key words and phrases}: Riemann tensor, symmetric manifold, 
recurrent manifold, weakly-Ricci symmetric manifold.}
\renewcommand{\thefootnote}{\arabic{footnote}}
\setcounter{footnote}{0}

\begin{abstract}
A second-order differential identity for the Riemann
tensor is obtained, on a manifold with symmetric connection.
Several old and some new differential identities for the Riemann and 
Ricci tensors descend from it. Applications to
manifolds with Recurrent or Symmetric structures are discussed.
The new structure of $K$-recurrency naturally emerges from an invariance
property of an old identity by Lovelock.
\end{abstract}

\section{Introduction}\label{sect1}
Given a symmetric connection 
on a smooth manifold, one introduces the covariant derivative 
and the Riemann curvature tensor
$R_{abc}{}^d = \partial_a \Gamma_{bc}^d - \partial_b\Gamma_{ac}^d -
\Gamma_{ac}^k\Gamma_{bk}^d + \Gamma_{ak}^d \Gamma_{bc}^k $.
The tensor is antisymmetric in $a,b$ and satisfies the two
Bianchi identities, $ R_{(abc)}{}^d = 0 $ and $\nabla_{(a}R_{bc)d}{}^e =0$.  
\footnote{
Hereafter the symbol ($\cdots $) denotes a summation over {\emph cyclic} 
permutation of tensor indices:
$K_{(abc)}$ $=K_{abc}+K_{bca}+K_{cab}$. If 
$K_{abc}$ $=-K_{bac}$ then $K_{(abc)}=3K_{[abc]}$,
where $[abc]$ means complete antisymmetrization.}
From the Bianchi identities various others for the Riemann tensor and 
the Ricci tensor $R_{ac}=R_{abc}{}^b$ can be derived. 
The following first-order one is due to 
Oswald Veblen \cite{Eisenhart,Lovelock}:
\begin{equation}
\nabla_a R_{bcd}{}^e-\nabla_b R_{adc}{}^e+\nabla_c R_{adb}{}^e
-\nabla_d R_{bca}{}^e =0 \label{veblenequation}
\end{equation}
If the connection is inherited from a metric, Walker's identity of second 
order holds \cite{Walker51,Schouten}
\begin{equation}
[\nabla_a,\nabla_b]R_{cdef}+[\nabla_c,\nabla_d]R_{abef}+[\nabla_e,\nabla_f]
R_{abcd}=0\label{walker}
\end{equation}
and, if the Ricci tensor vanishes, Lichnerowicz's non linear wave 
equation holds \cite{Hughston,Misner} 
\begin{equation}
\nabla^e\nabla_e R_{abcd} + R_{ab}{}^{ef}R_{efcd}
 - 2 R^e{}_{ac}{}^f R_{ebdf} + 2R_e{}_{ad}{}^f R^e{}_{bcf}=0 
\label{Lichnerowicz}
\end{equation}

In this paper we derive, with the only requirement that the connection is 
symmetric, a useful identity for the cyclic combination 
$\nabla_{(a}\nabla_bR_{cd)e}{}^f $.
An identity by Lovelock for the divergence of the 
Riemann tensor follows. 
We show that it holds more generally for curvature tensors $K$
originating from the Riemann tensor  (Weyl, Concircular etc.).

The main identity and Lovelock's enable us to reobtain in a unified
manner various known identities, and some new ones, that apply in
Riemannian spaces with structures. In Section 3 we show that various
differential structures, such as a) Locally symmetric, 
b) Nearly Conformally Symmetric,
c) Semisymmetric, d) Pseudosymmetric, e) Generalized Recurrent, lead 
to the same set of algebraic identities for the Riemann tensor. 
We then introduce the new structures, $K$-harmonic and $K$-recurrent,
that also yield the set of identities, and include cases a), b), e)
and others, that arise from different choices of the $K-$tensor.

In Section 4, the Weakly Ricci Symmetric structure is considered,
with its covectors A,B,D. We show that one of the above algebraic 
identities holds iff the vector field $A-B$ is closed.
We end with Section 5, where we derive Lichnerowicz's wave equation 
for the Riemann tensor from the main equation.

\section{A second order identity}\label{sect2}
We begin with the main identity; as a corollary we reobtain an identity 
by Lovelock which is used throughout the paper, and a generalization of it, 
for $K-$curvature tensors.
\begin{mainthm}{(The second order identity)}\\
In a smooth differentiable manifold with symmetric connection: 
\begin{equation}
\nabla_{(a}\nabla_bR_{cd)e}{}^f = R_{(abc}{}^m R_{d)me}{}^f
-R_{ace}{}^m R_{bdm}{}^f + R_{acm}{}^f R_{bde}{}^m 
\label{iii}
\end{equation}
\end{mainthm}
\begin{proof}
Take a covariant derivative of the second Bianchi identity, and sum
over ciclic permutations of four indices $abcd$:
\begin{align}
0&=\nabla_a \nabla_{(b}R_{cd)e}{}^f + \nabla_b \nabla_{(c}R_{da)e}{}^f 
+\nabla_c \nabla_{(d}R_{ab)e}{}^f + \nabla_d \nabla_{(a}R_{bc)e}{}^f
\nonumber \\ 
&=2\nabla_{(a}\nabla_b R_{cd)e}{}^f +[\nabla_b,\nabla_a]R_{cde}{}^{f}
        +[\nabla_c,\nabla_b]R_{dae}{}^{f}\nonumber \\
&\quad +[\nabla_d,\nabla_c]R_{abe}{}^{f}
        +[\nabla_a,\nabla_d]R_{bce}{}^{f}
 +[\nabla_a,\nabla_c]R_{dbe}{}^{f}
        +[\nabla_b,\nabla_d]R_{ace}{}^{f}\label{commut}
\end{align}
The action of a commutator on the curvature tensor gives quadratic terms
\begin{displaymath}
[\nabla_a,\nabla_b]R_{cde}{}^{f} =
-R_{abc}{}^kR_{kde}{}^f
 - R_{abd}{}^kR_{cke}{}^f -R_{abe}{}^kR_{cdk}{}^f+ R_{abk}{}^f R_{cde}{}^k
\end{displaymath}
that produce 24 quadratic terms. Eight of them cancel 
because of the antisymmetry of $R$ and the remaining ones can be
grouped as follows:
\begin{align}
&=2\nabla_{(a}\nabla_b R_{cd)e}{}^f +  2R_{ace}{}^s R_{bds}{}^f +
2R_{acs}{}^f R_{dbe}{}^s\nonumber \\
&-R_{sce}{}^f (R_{adb}{}^s + R_{bda}{}^s +R_{abd}{}^s)
-R_{sbe}{}^f (R_{dca}{}^s + R_{acd}{}^s +R_{dac}{}^s)\nonumber\\
&-R_{ase}{}^f (R_{dcb}{}^s + R_{bdc}{}^s +R_{bcd}{}^s)
-R_{dse}{}^f (R_{cba}{}^s + R_{acb}{}^s +R_{abc}{}^s).\nonumber
\end{align}
The last two lines simplify by the first Bianchi identity,
\begin{align}
&=2\nabla_{(a}\nabla_b R_{cd)e}{}^f +  2R_{ace}{}^s R_{bds}{}^f +
2R_{acs}{}^f R_{dbe}{}^s
-2R_{sce}{}^f R_{adb}{}^s + 2R_{sbe}{}^f R_{cda}{}^s \nonumber\\
&-2R_{ase}{}^f R_{bcd}{}^s +2R_{sde}{}^f R_{abc}{}^s \nonumber
\end{align}
Four terms are seen to be a cyclic summation $(abcd)$.
\end{proof}

The contraction of $f$ with the index $e$ gives an equation for the 
antisymmetric part of the Ricci tensor  
(which coincides with $R_{abc}{}^c$ by the first Bianchi identity):
\begin{cor}
If $U_{ab}= R_{ab}-R_{ba}$ then  
 $\nabla_{(a}\nabla_bU_{cd)} = R_{(abc}{}^mU_{d)m}$.
\end{cor}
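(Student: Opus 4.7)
The plan is to contract the Main Theorem's identity on the pair of indices $(e,f)$ by setting $f=e$ and summing.

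The key preliminary identity I would establish is $R_{abc}{}^c = U_{ab}$. This follows from the first Bianchi identity $R_{(abc)}{}^d=0$: expanding the cyclic sum and using the antisymmetry of $R$ in its first two indices, the two non-vanishing contractions are $R_{bca}{}^c = R_{ba}$ and $R_{cab}{}^c = -R_{acb}{}^c = -R_{ab}$ in the paper's convention $R_{ac}=R_{abc}{}^b$, so $R_{abc}{}^c = R_{ab}-R_{ba} = U_{ab}$.

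With this in hand the contracted left-hand side of the main identity becomes $\nabla_{(a}\nabla_b U_{cd)}$, since covariant differentiation commutes with contraction, and the first right-hand side term becomes $R_{(abc}{}^m U_{d)m}$, which is precisely the target.

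The remaining task is to show that the last two quadratic terms on the right, $-R_{ace}{}^m R_{bdm}{}^f + R_{acm}{}^f R_{bde}{}^m$, cancel under the contraction $f=e$. I expect this to be the only step requiring explicit calculation, and the main difficulty is simply avoiding a sign or index error: a relabeling $e \leftrightarrow m$ in the second resulting double sum turns it into the negative of the first, so the two annihilate. Assembling the three parts yields exactly $\nabla_{(a}\nabla_b U_{cd)} = R_{(abc}{}^m U_{d)m}$, the statement of the corollary.
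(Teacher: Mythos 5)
Your proposal is correct and is exactly the paper's route: the paper obtains this corollary by contracting $f$ with $e$ in the main identity, noting (as you do via the first Bianchi identity) that $R_{abc}{}^c=U_{ab}$, and the two remaining quadratic terms cancel under the relabeling $e\leftrightarrow m$ just as you describe. Nothing is missing.
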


The contraction of $f$ with the index $a$ brings to an identity for the 
{\em divergence} of the Riemann tensor (\cite{Lovelock} ch. 7), which 
will be used extensively. We refer to it as 
\begin{cor} {(Lovelock's differential identity)}
\begin{align}
\nabla_a\nabla_m R_{bce}{}^m + \nabla_{b}\nabla_m R_{cae}{}^m+
\nabla_c\nabla_m R_{abe}{}^m\nonumber \\
= R_{am}R_{bce}{}^m + R_{bm}R_{cae}{}^m + R_{cm}R_{abe}{}^m  \label{DdivR}
\end{align}
\end{cor}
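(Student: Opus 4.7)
The plan is to contract the main identity by setting $f=a$ (tracing the upper index $f$ against the first cyclic label $a$), yielding a tensor identity in the remaining free indices $b, c, d, e$ which, after simplification, is Lovelock's identity with those four indices relabeled as $a, b, c, e$.

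I would use two auxiliary tools throughout. First, elementary contractions of the first Bianchi identity give $R_{abc}{}^a = -R_{bc}$, $R_{bac}{}^a = R_{bc}$, and $R_{abc}{}^c = R_{ab} - R_{ba}$; these collapse all Ricci-type self-contractions of a Riemann tensor that appear once $f=a$. Second, the contracted second Bianchi identity $\nabla_m R_{bcd}{}^m = \nabla_c R_{bd} - \nabla_b R_{cd}$ allows stray second derivatives of the Ricci tensor to be repackaged as the divergence-of-Riemann terms demanded by Lovelock.

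On the LHS, two of the four cyclic-sum terms (those in which the dummy $a$ self-contracts inside the Riemann tensor) reduce immediately via the first tool to $\nabla_b\nabla_c R_{de}$ and $-\nabla_c\nabla_d R_{be}$; one further term, $\nabla_d\nabla_a R_{bce}{}^a$, is already in Lovelock divergence form $\nabla_d\nabla_m R_{bce}{}^m$; and the remaining term $\nabla_a\nabla_b R_{cde}{}^a$ has its derivatives commuted to produce a second Lovelock divergence $\nabla_b\nabla_m R_{cde}{}^m$ together with a commutator $[\nabla_a,\nabla_b]R_{cde}{}^a$. The commutator expands into four quadratic Riemann contractions, one of which (the hit on the contracted index $a$) reduces via the first tool to a Ricci-Riemann Lovelock term. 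On the RHS of the main identity, the diagonal piece $R_{acm}{}^a R_{bde}{}^m$ and one term of the cyclic sum $R_{(abc}{}^m R_{d)me}{}^a$ (namely $R_{bcd}{}^m R_{ame}{}^a$) both collapse via the first tool to Ricci-Riemann Lovelock terms, while the remaining terms of the cyclic sum and the piece $-R_{ace}{}^m R_{bdm}{}^a$ stay as paired Riemann-squared contractions. The second tool is then applied to convert the stray second derivatives of Ricci on the LHS into the missing third Lovelock divergence $\nabla_c\nabla_m R_{dbe}{}^m$.

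The main obstacle will be the cancellation of the roughly ten paired Riemann-squared contractions arising from the LHS commutator and from the RHS of the main identity. These are disposed of by systematic application of the first Bianchi identity on various triples of indices together with the antisymmetries of the Riemann tensor; the calculation is conceptually routine but combinatorially intricate, and the sign and index-position bookkeeping is the most delicate aspect of the proof. What survives after cancellation is precisely the three divergence-of-Riemann terms on the LHS of Lovelock's identity and the three Ricci-times-Riemann products on the RHS.
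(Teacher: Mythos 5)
Your proposal follows the paper's own proof essentially step for step: contract $f$ with the index $a$ in the main identity, collapse the self-contracted Riemann factors to Ricci tensors via the first Bianchi identity, commute the derivatives in $\nabla_a\nabla_b R_{cde}{}^a$ to expose a Lovelock divergence plus a commutator, and use the contracted second Bianchi identity to turn the residual second derivative of the Ricci tensor into the third divergence term. The quadratic Riemann--Riemann contractions do cancel by the first Bianchi identity exactly as you anticipate, so the plan is correct and coincides with the paper's argument.
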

\begin{proof} The contraction in (\ref{iii}) gives 
$\nabla_{(a}\nabla_bR_{cd)e}{}^a = 
R_{(abc}{}^m R_{d)me}{}^a-R_{ace}{}^m R_{bdm}{}^a - R_{cm} R_{bde}{}^m$. 
The two cyclic sums are now written explicitly:
$ \nabla_{a}\nabla_bR_{cde}{}^a
 +\nabla_{b}\nabla_cR_{de}
 -\nabla_{c}\nabla_dR_{be}
 +\nabla_{d}\nabla_aR_{bce}{}^a = 
 R_{abc}{}^m R_{dme}{}^a
-R_{bcd}{}^m R_{me} 
+R_{cda}{}^m R_{bme}{}^a 
+R_{dab}{}^m R_{cme}{}^a 
-R_{ace}{}^m R_{bdm}{}^a 
-R_{cm} R_{bde}{}^m $.
Next the order of covariant derivatives is exchanged, in the first term of 
the l.h.s. Some terms just cancel and a triplet vanishes for a Bianchi 
identity. One gets 
$\nabla_b\nabla_a R_{cde}{}^a+
\nabla_{b}\nabla_cR_{de}-\nabla_{c}\nabla_dR_{be}
+\nabla_{d}\nabla_aR_{bce}{}^a =  R_{ba}R_{cde}{}^a -R_{ae}R_{bcd}{}^a
-R_{ca}R_{bde}{}^a$.
A Ricci term in the l.h.s. is replaced with the identity
$\nabla_c \nabla_d R_{be}= \nabla_c(\nabla_b R_{de} -\nabla_a
R_{dbe}{}^a)$. The l.h.s. becomes:
$\nabla_b\nabla_a R_{cde}{}^a+ \nabla_c\nabla_a R_{dbe}{}^a +  
\nabla_{d}\nabla_aR_{bce}{}^a + [\nabla_b,\nabla_c]R_{de}$
It is a cyclic sum on $(bcd)$ plus a commutator. The latter
is moved to the r.h.s. and evaluated. A cancellation of two
terms occurs and the r.h.s. ends as a cyclic sum too:  
$R_{bce}{}^a R_{da}+R_{cde}{}^a R_{ba}+ R_{dbe}{}^a R_{ca}.$
\end{proof}

\begin{xrem} Lovelock's identity is left invariant if the divergence of 
the Riemann tensor in the l.h.s. is replaced by the divergence of any 
curvature tensor $K$ with the property
\begin{equation}
\nabla_m K_{bce}{}^m = A\, \nabla_m R_{bce}{}^m + B\,(a_{be}\nabla_c \varphi-
a_{ce}\nabla_b\varphi), \label{divQ}
\end{equation}
where $A$ and $B$ are nonzero constants, $\varphi $ is a real 
scalar function and 
$a_{bc}$ is a symmetric (0,2) Codazzi tensor, i.e. $\nabla_b a_{cd}=\nabla_c
a_{bd}$ \cite{Derdzinski83}.
\end{xrem}
Some curvature tensors $K$ with the property (\ref{divQ}) and 
trivial Codazzi tensor (i.e. constant multiple of the metric)
are well known: Weyl's conformal tensor $C$
 \cite{Postnikov}, the projective curvature tensor $P$ \cite{Eisenhart}, 
the concircular tensor $\tilde C$ \cite{Yano40,Schouten}, the
conharmonic tensor $N$ \cite{Mishra84,Singh99} and the quasi conformal 
curvature tensor $W$ \cite{Yano68}. Their definitions and some 
identities used
in this paper are collected in the Appendix.
Since in the next section we introduce the concept of $K$-{\em recurrency}, 
and Weyl's tensor will be considered in section \ref{weaklysymm},
we give a proof of the remark:
\begin{prop}\label{propdivQ}
\begin{align}
&&\nabla_a\nabla_m K_{bce}{}^m + \nabla_{b}\nabla_m K_{cae}{}^m+
\nabla_c\nabla_m K_{abe}{}^m\nonumber \\
&&= A[R_{am}R_{bce}{}^m + R_{bm}R_{cae}{}^m + R_{cm}R_{abe}{}^m ] \label{DdivQ}
\end{align}
\end{prop}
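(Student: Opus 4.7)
The plan is to apply $\nabla_a$ to the hypothesis (\ref{divQ}) and cyclically permute $(abc)$. The resulting left-hand side is exactly the one in (\ref{DdivQ}), and I will show that the Riemann part reproduces the right-hand side via Corollary 2.3, while the correction involving $a_{bc}$ and $\varphi$ cancels identically.

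First I would take the covariant derivative of (\ref{divQ}) and sum over the cyclic permutations of $(abc)$. The contribution proportional to $A$ is
\begin{equation*}
A\bigl[\nabla_a\nabla_m R_{bce}{}^m + \nabla_b\nabla_m R_{cae}{}^m + \nabla_c\nabla_m R_{abe}{}^m\bigr],
\end{equation*}
which by Lovelock's identity (Corollary 2.3) equals the right-hand side of (\ref{DdivQ}). So everything reduces to showing that the cyclic sum of
\begin{equation*}
\nabla_a\bigl[a_{be}\nabla_c\varphi - a_{ce}\nabla_b\varphi\bigr]
\end{equation*}
over $(abc)$ vanishes.

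To handle this, I would apply Leibniz to the six summands and split each into a piece of type $(\nabla a)(\nabla\varphi)$ and a piece of type $a\,(\nabla\nabla\varphi)$. The six first-type terms group into three pairs such as $(\nabla_a a_{be} - \nabla_b a_{ae})\nabla_c\varphi$, each vanishing by the Codazzi condition $\nabla_b a_{cd} = \nabla_c a_{bd}$. The six second-type terms group into three pairs of the form $a_{be}[\nabla_a,\nabla_c]\varphi$, each vanishing because $\varphi$ is a scalar and the connection is symmetric, so the commutator annihilates $\varphi$. Adding everything, the entire $B$-contribution disappears and (\ref{DdivQ}) is established.

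The only real obstacle is the bookkeeping of the twelve terms generated by the cyclic sum, so I would organise them into the three Codazzi pairs and the three commutator pairs described above; apart from this, the argument is essentially a direct verification, reflecting the fact that the structure of the admissible correction in (\ref{divQ}) is exactly what is needed to survive the combined operations of covariant differentiation and cyclic antisymmetrization.
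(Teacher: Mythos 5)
Your proposal is correct and follows essentially the same route as the paper: differentiate (\ref{divQ}), sum cyclically, kill the $a\,\nabla\nabla\varphi$ terms by the symmetry of second covariant derivatives of a scalar, kill the $(\nabla a)(\nabla\varphi)$ pairs by the Codazzi condition, and convert the remaining $A$-part with Lovelock's identity. The paper presents the surviving $B$-terms already grouped into the three Codazzi pairs, so your more explicit twelve-term bookkeeping is just a fuller write-up of the same argument.
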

\begin{proof}
The covariant derivative $\nabla_a$ of (\ref{divQ}) is evaluated and 
then summed with indices chosen as in Lovelock's identity. 
Since a symmetric connection is assumed, we obtain:
\begin{eqnarray}
&&\nabla_a\nabla_m K_{bce}{}^m + \nabla_{b}\nabla_m K_{cae}{}^m+
\nabla_c\nabla_m K_{abe}{}^m\nonumber \\
&&=A[\nabla_a\nabla_m R_{bce}{}^m + \nabla_{b}\nabla_m R_{cae}{}^m+
\nabla_c\nabla_m R_{abe}{}^m]\nonumber\\
&& + B[(\nabla_b a_{ce}-\nabla_c a_{be})\nabla_a\varphi+
(\nabla_c a_{ae}-\nabla_a a_{ce})\nabla_b\varphi +
(\nabla_a a_{be}-\nabla_b a_{ae})\nabla_c\varphi ].\nonumber
\end{eqnarray}
The last line is zero if $a_{bc}$ is a Codazzi tensor.
Lovelock's identity is then used to write the r.h.s. as in (\ref{DdivQ}).
\end{proof}
An apparently new Veblen-type identity for the divergence of the 
Riemann tensor can be obtained by summing Lovelock's identity
with indices cycled:
\begin{cor} 
\begin{eqnarray}\label{divVeblen}
&&\nabla_a\nabla_m R_{bec}{}^m - 
\nabla_b\nabla_mR_{ace}{}^m + \nabla_c\nabla_m R_{eba}{}^m -
\nabla_e\nabla_m R_{cab}{}^m\nonumber\\
&& = R_{am} R_{bec}{}^m 
-R_{bm}R_{ace}{}^m + R_{cm} R_{eba}{}^m -R_{em} R_{cab}{}^m 
\end{eqnarray}
\end{cor}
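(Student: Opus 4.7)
My plan is to derive (\ref{divVeblen}) by summing two carefully chosen instances of Lovelock's identity (\ref{DdivR}). Each instance cyclically sums three of the four available indices $\{a,b,c,e\}$; my proposal is to combine the instance with cyclic triple $(a,b,c)$ and free index $e$ with the instance whose cyclic triple is $(c,e,a)$ and free index is $b$, and then add them.

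After the addition, two pairs of Riemann tensors on the LHS share a common covariant derivative: $\nabla_a\nabla_m(R_{bce}{}^m+R_{ceb}{}^m)$ and $\nabla_c\nabla_m(R_{abe}{}^m+R_{eab}{}^m)$. I would collapse each bracket using the first Bianchi identity, obtaining $-R_{ebc}{}^m$ and $-R_{bea}{}^m$ respectively, and then use the antisymmetry of $R$ in its first two slots to rewrite these as $R_{bec}{}^m$ and $R_{eba}{}^m$. The two surviving terms $\nabla_b\nabla_m R_{cae}{}^m$ and $\nabla_e\nabla_m R_{acb}{}^m$ similarly become $-\nabla_b\nabla_m R_{ace}{}^m$ and $-\nabla_e\nabla_m R_{cab}{}^m$ by the same antisymmetry, reproducing exactly the LHS of (\ref{divVeblen}). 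The identical Bianchi-and-antisymmetry surgery, applied to the six Ricci-Riemann products on the RHS of the two Lovelock instances (with $R_{am}$, $R_{bm}$, $R_{cm}$, $R_{em}$ playing the structural role that the derivatives played on the left), yields the quadratic part of (\ref{divVeblen}).

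I do not foresee a serious difficulty: once the pair of Lovelock instances is fixed, everything else is routine index bookkeeping. The one mildly subtle point, and really the only step that requires any thought, is the selection of the two instances. Among the four possible "free-index" choices on $\{a,b,c,e\}$, precisely this pair produces Riemann tensors that combine under Bianchi into the alternating-sign, Veblen-type pattern of the target. Other natural-looking combinations fail; for example, an alternating-sign sum of all four instances annihilates every LHS term directly by Bianchi and yields only the trivial identity $0=0$.
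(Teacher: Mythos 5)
Your derivation is correct and follows essentially the same route as the paper: the paper obtains (\ref{divVeblen}) by summing Lovelock's identity (\ref{DdivR}) over all four cyclic permutations of $(a,b,c,e)$ and simplifying with the first Bianchi identity (which yields twice the target), while you sum only the two instances with free indices $e$ and $b$ --- the same lemma and the same Bianchi-plus-antisymmetry bookkeeping, just with less redundancy. One small aside: the pair you chose is not the unique one that works; the complementary pair (free indices $a$ and $c$) gives the identical result, as your own observations show, since the full four-term sum equals twice the target while the alternating sum vanishes, forcing the two pairwise sums to coincide.
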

\begin{proof}
Write Lovelock's identity (\ref{DdivR}) for all cyclic permutations
of $(a,b,c,e)$ and sum them. Simplify by using the first Bianchi identity. 
\end{proof}
We note that an analogous Veblen-type identity can be obtained
for a tensor $K$, starting from Proposition (\ref{propdivQ}). 
\begin{cor}
In a manifold with Levi-Civita connection 
\begin{align}
\nabla_{m}\nabla_{n}R_{ab}{}^{mn} = 0\label{divdivR}
\end{align}
\end{cor}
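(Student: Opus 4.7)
The plan is to derive the identity by contracting Lovelock's differential identity (\ref{DdivR}) with the inverse metric $g^{ea}$. Since the Levi-Civita connection is metric-compatible, $g^{ea}$ passes freely through the covariant derivatives, so one can move it wherever is convenient. I expect the first cyclic term on the left to produce $\nabla^e\nabla_m R_{bce}{}^m$ directly, which (after renaming free indices and using $\nabla g=0$) is precisely the quantity $\nabla_m\nabla_n R_{ab}{}^{mn}$ whose vanishing is to be established.

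For the remaining two left-hand terms I would use the paper's trace convention $R_{ac}=R_{abc}{}^b$ together with the Levi-Civita pair-exchange symmetry $R_{abcd}=R_{cdab}$ to evaluate the Riemann traces as $g^{ea}R_{abe}{}^m = R_b{}^m$ and $g^{ea}R_{cae}{}^m = -R_c{}^m$. Composing with the once-contracted Bianchi identity $\nabla_m R_a{}^m=\tfrac12\nabla_a R$ should collapse those two terms to $\tfrac12[\nabla_c,\nabla_b]R$, which vanishes because $R$ is a scalar.

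On the right-hand side the same two Riemann traces will leave $R^{em}R_{bcem}$ from the term $g^{ea}R_{am}R_{bce}{}^m$, plus $R_{cm}R_b{}^m - R_{bm}R_c{}^m$ from the other two. The first piece vanishes because the Ricci factor is symmetric in $(e,m)$ while the Riemann factor is antisymmetric in that same pair; the second piece vanishes by symmetry of the Ricci tensor, since after raising one index each term equals the other up to relabeling of dummy indices. Combining the two sides yields $\nabla^e\nabla_m R_{bce}{}^m=0$, which is the stated identity.

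The main obstacle I anticipate is purely bookkeeping: one must be careful about which trace of the Riemann tensor gives $+R_\bullet{}^\bullet$ versus $-R_\bullet{}^\bullet$ under the paper's (2nd, 4th)-slot Ricci convention, because Lovelock's identity is cyclic and a sign slip in either of $g^{ea}R_{abe}{}^m$ or $g^{ea}R_{cae}{}^m$ would spoil the cancellation of the scalar-curvature terms on the left and of the Ricci-Ricci terms on the right. Once those two reductions are correctly in hand, the argument is essentially automatic.
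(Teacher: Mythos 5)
Your proof is correct and follows essentially the same route as the paper, which simply contracts Lovelock's identity (\ref{DdivR}) with $g^{ce}$ and cites Lovelock's handbook; your choice of $g^{ea}$ instead of $g^{ce}$ is immaterial because the identity is cyclic in $(a,b,c)$. Your sign bookkeeping for the two Riemann traces ($g^{ea}R_{abe}{}^m=R_b{}^m$, $g^{ea}R_{cae}{}^m=-R_c{}^m$) and the resulting cancellations via $\nabla_mR_a{}^m=\tfrac12\nabla_aR$, $[\nabla_c,\nabla_b]R=0$ and the symmetry of the Ricci tensor are all accurate, so you have in fact supplied the details the paper omits.
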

\begin{proof}
Eq.(\ref{DdivR}) is contracted with $g^{ce}$. The formula is reported in 
Lovelock's handbook \cite{Lovelock}.
\end{proof}


\section{Symmetric and Recurrent structures}
From now on, we restrict to Riemannian manifolds $({\mathcal M}^n,g)$. If
additional differential structures are present, the 
differential identities (\ref{iii}), (\ref{DdivR}) and (\ref{divVeblen}) 
simplify to interesting algebraic constraints.

A simple example is given by a {\em Locally Symmetric Space} \cite{KobNom}, 
i.e. a Riemannian manifold such that $\nabla_a  R_{bcd}{}^e = 0$. Then the 
aforementioned identities imply straightforwardly the algebraic ones 
\begin{eqnarray}
&& R_{(abc}{}^m R_{d)me}{}^f
-R_{ace}{}^m R_{bdm}{}^f + R_{acm}{}^f R_{bde}{}^m =0\label{RRR}\\
&& R_{am}R_{bce}{}^m + R_{bm}R_{cae}{}^m+ R_{cm}R_{abe}{}^m =0  \label{RR}\\
&& R_{am} R_{bec}{}^m 
-R_{bm}R_{ace}{}^m + R_{cm} R_{eba}{}^m -R_{em} R_{cab}{}^m =0\label{RRRR}
\end{eqnarray}
We show that these identities hold in several circumstances.
An example is a manifold with harmonic curvature \cite{Besse}, 
$\nabla_a R_{bcd}{}^a = 0$; in this less stringent case the general
property (\ref{DdivR}) yields (\ref{RR}) and (\ref{RRRR}). 
A slightly more general case is now considered
\begin{defin}
A manifold is Nearly Conformally Symmetric, (NCS)$_n$, 
(Roter \cite{Roter}) if
\begin{align}
\nabla_aR_{bc} - \nabla_b R_{ac} = \frac{1}{2(n-1)}[g_{bc}\nabla_a R -
g_{ac}\nabla_{b}R],\label{NCSM}
\end{align}
where $R=R_a{}^a$ is the curvature scalar. 
\end{defin}
Since $\nabla_a R_{bc}-
\nabla_b R_{ac}=-\nabla_m R_{abc}{}^m$, (NCS)$_n$ are a special case of
(\ref{divQ}) with $\nabla_m K_{bce}{}^m=0$ (trivial Codazzi tensor and
$\varphi = R$). Other particular cases are $K=0$ ($K$-flat) and 
$\nabla_a K_{bcd}{}^e=0$ 
($K$-symmetric). They yield, for the different choices of $K$, various types 
of $K-$flat/symmetric manifolds\cite{Singh99}:
conformally flat/symmetric ($K=C$)\cite{Chaki63,Dzerdinski78}, 
projectively flat/symmetric ($K=P$)\cite{Glodek71}, 
concircular or conharmonic symmetric\cite{Adati67}, 
and quasi conformally flat/symmetric.
Because of  Prop. \ref{propdivQ}, the following is true:
\begin{prop}\label{propNCS}
For (NCS)$_n$ manifolds, and for $K-$flat/symmetric manifolds, eqs. 
(\ref {RR}) and (\ref{RRRR}) hold.
\end{prop}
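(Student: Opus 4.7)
The decisive observation, already flagged in the paragraph immediately preceding the statement, is that in each of the listed cases the manifold carries a curvature tensor $K$ of the type described in the Remark (with trivial Codazzi tensor $a_{bc}=g_{bc}$) such that $\nabla_m K_{bce}{}^m=0$. For a $K$-flat manifold this holds trivially; for a $K$-symmetric one it is forced by $\nabla_a K_{bcd}{}^e=0$. For a (NCS)$_n$ manifold I would use the contracted second Bianchi identity $\nabla_m R_{abc}{}^m=\nabla_b R_{ac}-\nabla_a R_{bc}$ together with (\ref{NCSM}) to recognise the Weyl tensor in the role of $K$, with $\varphi=R$, $A=1$ and $B=-1/[2(n-1)]$, and to verify that it has vanishing divergence.

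With this reduction in hand, (\ref{RR}) is immediate: the left-hand side of (\ref{DdivQ}) in Proposition \ref{propdivQ} is built from two covariant derivatives of $\nabla_m K_{bce}{}^m$ and hence vanishes identically; since $A\neq 0$, the bracket on the right-hand side must also vanish, which is exactly (\ref{RR}).

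To obtain (\ref{RRRR}) I would repeat, at the algebraic level, the cyclic-summation argument that produced the Veblen-type identity (\ref{divVeblen}) from Lovelock's identity (\ref{DdivR}): write (\ref{RR}) for each cyclic permutation of $(a,b,c,e)$, sum the four equations, and simplify using the first Bianchi identity and the antisymmetry of the Riemann tensor; the result should collapse to (\ref{RRRR}). An equivalent route is to invoke the Veblen-type analog of Proposition \ref{propdivQ} noted in the paper just after the derivation of (\ref{divVeblen}), whose left-hand side again vanishes by $\nabla_m K_{bce}{}^m=0$, forcing the right-hand side --- namely $A$ times (\ref{RRRR}) --- to vanish. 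No genuine obstacle is foreseeable; the only point requiring some care is the explicit identification of $K$ in the (NCS)$_n$ case, and that is handled by the Bianchi-type computation above.
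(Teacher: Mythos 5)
Your proposal is correct and follows the same route as the paper, which proves this proposition simply by observing that each listed structure makes $\nabla_m K_{bce}{}^m=0$ for a suitable $K$ of type (\ref{divQ}) (for (NCS)$_n$ one may take $A=1$, $B=-1/[2(n-1)]$, $\varphi=R$, $a_{bc}=g_{bc}$ --- note the actual Weyl tensor carries the extra factor $(n-3)/(n-2)$, but only $A\neq 0$ matters), so that the left side of (\ref{DdivQ}) vanishes and (\ref{RR}) follows, with (\ref{RRRR}) then obtained by the same cyclic summation and first-Bianchi simplification used to pass from (\ref{DdivR}) to (\ref{divVeblen}).
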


By weakening the defining condition of a Locally Symmetric Space, one
introduces a {\em Semisymmetric Space}: $[\nabla_a,\nabla_b] R_{cdef}=0$
\cite{Szabo82}.
\begin{prop}\label{semisymm}
For a Semisymmetric Space, eqs.(\ref{RRR}), (\ref{RR}) and (\ref{RRRR})
hold.
\end{prop}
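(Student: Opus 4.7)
The plan is to derive all three algebraic identities by showing that, in a semisymmetric space, the left-hand sides of the three differential identities (\ref{iii}), (\ref{DdivR}) and (\ref{divVeblen}) vanish. The key observation is that $[\nabla_a,\nabla_b]R_{cdef}=0$ is equivalent to $[\nabla_a,\nabla_b]R_{cde}{}^f=0$ on a Riemannian manifold, because the metric is parallel with respect to the Levi-Civita connection. So every commutator of second derivatives acting on the Riemann tensor (in any index position) vanishes.

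For (\ref{RRR}), I would revisit the computation in the proof of the Main Theorem, where the cyclic sum of second covariant derivatives of the second Bianchi identity was rewritten as
\[
2\nabla_{(a}\nabla_b R_{cd)e}{}^f + [\nabla_b,\nabla_a]R_{cde}{}^f + [\nabla_c,\nabla_b]R_{dae}{}^f + [\nabla_d,\nabla_c]R_{abe}{}^f + [\nabla_a,\nabla_d]R_{bce}{}^f + [\nabla_a,\nabla_c]R_{dbe}{}^f + [\nabla_b,\nabla_d]R_{ace}{}^f = 0.
\]
In the semisymmetric case each of the six commutators is zero, so $\nabla_{(a}\nabla_b R_{cd)e}{}^f=0$, and (\ref{iii}) delivers (\ref{RRR}) immediately.

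For (\ref{RR}), I would show that the left-hand side of Lovelock's identity vanishes. Writing
\[
\nabla_a \nabla_m R_{bce}{}^m = \nabla_m \nabla_a R_{bce}{}^m + [\nabla_a,\nabla_m]R_{bce}{}^m,
\]
the commutator disappears by semisymmetry, and summing over the cyclic triple $(a,b,c)$ the remaining term becomes $\nabla_m \nabla_{(a}R_{bc)e}{}^m$, which is zero by the second Bianchi identity. Hence the whole l.h.s.\ of (\ref{DdivR}) is zero and (\ref{RR}) follows. Finally, (\ref{RRRR}) is obtained the same way: since the l.h.s.\ of Lovelock's identity vanishes for every cyclic permutation of its indices, summing over the four cyclic permutations of $(a,b,c,e)$ — as in the derivation of (\ref{divVeblen}) — forces the l.h.s.\ of (\ref{divVeblen}) to vanish, yielding (\ref{RRRR}).

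There is no real obstacle; the only subtle point is noting that in the presence of a metric the condition $[\nabla_a,\nabla_b]R_{cdef}=0$ propagates to every raised-index version of the Riemann tensor, so that all the commutator terms appearing in the proofs of the Main Theorem and its corollaries can be set to zero without further work.
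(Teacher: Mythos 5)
Your proof is correct, and for (\ref{RRR}) it coincides with the paper's argument: the six commutators in (\ref{commut}) vanish, so $\nabla_{(a}\nabla_bR_{cd)e}{}^f=0$ and (\ref{iii}) gives the claim. For (\ref{RR}) and (\ref{RRRR}) you take a slightly different route: the paper first rewrites $\nabla_mR_{abc}{}^m$ as $\nabla_bR_{ac}-\nabla_aR_{bc}$ via the contracted second Bianchi identity, so that the left-hand sides of (\ref{DdivR}) and (\ref{divVeblen}) become sums of three, respectively four, commutators acting on the \emph{Ricci} tensor, which vanish since semisymmetry contracts to $[\nabla_a,\nabla_b]R_{ce}=0$; you instead commute the outer derivative past the divergence, $\nabla_a\nabla_mR_{bce}{}^m=\nabla_m\nabla_aR_{bce}{}^m$, and kill the cyclic sum with the uncontracted second Bianchi identity. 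Both manipulations are equally short and rest on the same observation — which you correctly make explicit — that $[\nabla_a,\nabla_b]R_{cdef}=0$ propagates to every contraction and raised-index version of the curvature because the metric is parallel; so the difference is essentially cosmetic.
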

\begin{proof}
First statement: eq.(\ref{commut}) simplifies
to $0=\nabla_{(a}\nabla_b R_{cd)ef}$; by eq.(\ref{iii}) the identity 
(\ref{RRR}) follows. Second statement: the definition implies a relation 
for the Ricci tensor: $[\nabla_a,\nabla_b] R_{ce}=0$. By inserting the 
identity $\nabla_m R_{abc}{}^m= \nabla_bR_{ac}-\nabla_a R_{bc}$
in the l.h.s. of eqs. (\ref{DdivR}) and (\ref{divVeblen}), those sides 
become sums of respectively three and four commutators 
of derivatives acting on Ricci tensors, and thus vanish. This implies 
eqs.(\ref{RR}) and (\ref{RRRR}).
\end{proof}

The algebraic property (\ref{RR}) holds in presence of even more general 
differential structures. 
\begin{defin} A manifold is Pseudosymmetric (Deszcz \cite{Deszcz87}) if:
\begin{align}
[\nabla_a, \nabla_b] R_{cdef} = L_R Q(g,R)_{cdefab} \label{Desz}
\end{align}
where $L_R$ is a scalar function and the Tachibana tensor is
\begin{eqnarray}
Q(g,R)_{cdefab}= 
-g_{cb}R_{adef}+g_{ca}R_{bdef}-g_{db}R_{caef}+g_{da}R_{cbef}\nonumber\\
-g_{eb}R_{cdaf}+g_{ea}R_{cdbf}-g_{fb}R_{cdea}+g_{fa}R_{cdeb}.
\end{eqnarray}
\end{defin}
\begin{thm}
For Pseudosymmetric Manifolds, the identities (\ref{RR}) and (\ref{RRRR})
hold. 
\end{thm}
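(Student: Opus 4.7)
The plan is to mimic the proof of Proposition \ref{semisymm} for Semisymmetric Spaces, treating the Pseudosymmetric case as a controlled perturbation that still vanishes after cyclic summation. The key preparatory step is to contract the defining relation (\ref{Desz}) with $g^{df}$ on the Ricci-pair of indices in $R_{cdef}$. Using $\nabla g=0$, the left hand side becomes $[\nabla_a,\nabla_b]R_{ce}$, while on the right the Tachibana tensor $Q(g,R)_{cdefab}$ contracts to the analogous Ricci object. Four of its eight terms cancel in pairs, leaving
\begin{equation}
[\nabla_a,\nabla_b]R_{ce}=L_R\bigl(g_{ca}R_{be}-g_{cb}R_{ae}+g_{ea}R_{cb}-g_{eb}R_{ca}\bigr). \nonumber
\end{equation}
This is the only piece of structural input; everything that follows is algebraic.

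Now insert the contracted second Bianchi identity $\nabla_m R_{abd}{}^m=\nabla_b R_{ad}-\nabla_a R_{bd}$ into the left hand sides of Lovelock's identity (\ref{DdivR}) and of its Veblen-type variant (\ref{divVeblen}). As in the Semisymmetric case this converts the left hand sides into, respectively, a cyclic sum on $(abc)$ of three commutators $[\nabla_\bullet,\nabla_\bullet]R_{\bullet\bullet}$ acting on the symmetric Ricci tensor, and an analogous alternating sum of four such commutators. For a Semisymmetric space each commutator vanishes individually. For a Pseudosymmetric space each commutator is replaced by $L_R$ times the above Tachibana-like combination; the content of the theorem is then the algebraic assertion that these sums still vanish.

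The main step is therefore this check. Substituting the boxed expression, each commutator contributes four terms of type $g_{\ast\ast}R_{\ast\ast}$. Using the symmetry $R_{ab}=R_{ba}$ (guaranteed by the Levi-Civita assumption made at the beginning of Section 3), I group the twelve (resp.\ sixteen) contributions by their $g_{\ast\ast}R_{\ast\ast}$ type and verify that the coefficients cancel pair-wise. For (\ref{DdivR}) the cyclic symmetry on $(abc)$ and the symmetry of Ricci make the cancellation immediate in all six types; for (\ref{divVeblen}) the alternation in $(a,b,c,e)$ plays the same role. Once the left hand sides are shown to vanish, Lovelock's identity and (\ref{divVeblen}) yield exactly (\ref{RR}) and (\ref{RRRR}).

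The only mild obstacle is bookkeeping: one must be careful with signs in the cyclic/alternating sum and with the order of the four indices in the commutator, but no genuine identity beyond the first Bianchi identity, Ricci symmetry, and the contracted pseudosymmetric relation is needed. Note also that (\ref{RRR}) is not claimed here, which is consistent with the fact that the second-order identity (\ref{iii}) produces, after substitution of (\ref{Desz}), a nontrivial multiple of a Tachibana-type expression that does not vanish in general.
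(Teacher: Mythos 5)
Your proof is correct and follows essentially the same route as the paper: contract the pseudosymmetry relation (\ref{Desz}) to obtain the Ricci-pseudosymmetry formula, rewrite the left-hand sides of (\ref{DdivR}) and (\ref{divVeblen}) as cyclic/alternating sums of commutators on the Ricci tensor via the contracted second Bianchi identity, and check that the resulting Tachibana-type terms cancel using the symmetry of $g$ and $R_{ab}$. Your closing remark on why (\ref{RRR}) is not claimed is a sensible observation consistent with the paper.
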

\begin{proof}
The l.h.s. of eq.(\ref{RR}) can be written as a sum of commutators acting
on Ricci tensors:
\begin{eqnarray}
[\nabla_a,\nabla_c]R_{be}+[\nabla_b,\nabla_a]R_{ce}+[\nabla_c,\nabla_b]R_{ae}.
\end{eqnarray}
A commutator is obtained by contracting two indices in (\ref{Desz}); for
example, contraction of $c$ with $f$ gives
\begin{eqnarray}
[\nabla_a, \nabla_b] R_{de} = L_R (- g_{db}R_{ea}+g_{da}R_{eb}-
g_{eb}R_{da}+g_{ea}R_{db}),
\end{eqnarray}
i.e. the Ricci-pseudosymmetry property\cite{Deszcz89}.
Although each commutator is nonzero, their sum vanishes. Veblen's type 
identity is proven in a similar way. 
\end{proof}
We now show that (\ref{RRR}), (\ref{RR}) or (\ref{RRRR}) do hold in 
manifolds with recurrent structure.
\begin{defin}
A Riemannian manifold is a {\em Generalized Recurrent Manifold} if there
exist two vector fields $\lambda_a$ and $\mu_a$ such that
\begin{align}
\nabla_a R_{bcd}{}^e = \lambda_a R_{bcd}{}^e + \mu_a (\delta_b{}^e g_{cd}
-\delta_c{}^e g_{bd}) \label{grm}
\end{align}
\end{defin}
The manifolds were first introduced by Dubey\cite{Dubey79}, 
and studied by several authors\cite{De91,Maralabhavi99,Arslan09}. 
In particular, if $\mu_a=0$ the manifold is a {\em Recurrent Space}. 
Again, we shall prove that the algebraic identities (\ref{RRR}), (\ref{RR}) 
and (\ref{RRRR}) hold in this case. We need the following lemma, 
with a content slightly different than
the statement by Singh and Khan\cite{Singh00}.
\begin{lem}\label{XX}
In a Generalized Recurrent Manifold with curvature scalar $R\neq 0$
\begin{enumerate}
\item  if the curvature scalar $R$ is a constant then 
$\lambda $ is proportional to $\mu $ and either $\lambda $ is closed 
(i.e. $\nabla_a \lambda_b -\nabla_b \lambda_a=0$) 
or the manifold is a space of constant curvature,
$R_{abcd}=\frac{R}{n(n-1)}(g_{bd}g_{ac}-g_{ad}g_{bc})$; 
\item if the curvature scalar is not constant, then $\lambda $ is closed.
\end{enumerate}
\end{lem}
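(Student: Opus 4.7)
The plan is to contract (\ref{grm}) to reduce the problem to a scalar identity linking $\lambda$, $\mu$, $R$, and $\nabla R$, then to handle the two cases using the second Bianchi identity and the Ricci identity. Setting $e=c$ in (\ref{grm}) and using $R_{bd}=R_{bcd}{}^c$ gives $\nabla_a R_{bd} = \lambda_a R_{bd} - (n-1)\mu_a g_{bd}$; a further trace with $g^{bd}$ yields the key relation
\[
n(n-1)\,\mu_a = R\,\lambda_a - \nabla_a R,
\]
so $\mu$ is entirely determined by $\lambda$ and the scalar $R$.

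For part 1, $\nabla R = 0$ reduces the key relation to $\mu_a = \tfrac{R}{n(n-1)}\lambda_a$, the claimed proportionality. Substituting back into (\ref{grm}) produces the recurrence $\nabla_a R_{bcd}{}^e = \lambda_a T_{bcd}{}^e$ with $T_{bcd}{}^e := R_{bcd}{}^e + \tfrac{R}{n(n-1)}(\delta_b^e g_{cd} - \delta_c^e g_{bd})$; the condition $T\equiv 0$ is exactly the constant-curvature formula in the statement. Since the correction term is parallel ($R$ constant), one also has $\nabla_a T = \lambda_a T$. The second Bianchi identity applied to (\ref{grm}) then collapses to
\[
\lambda_a T_{bcd}{}^e + \lambda_b T_{cad}{}^e + \lambda_c T_{abd}{}^e = 0.
\]
Differentiating with $\nabla_f$ and invoking $\nabla_f T = \lambda_f T$ cancels the factor $\lambda_f$-multiple of this same identity, leaving the analogous statement with $\nabla_f\lambda_{\cdot}$ in place of $\lambda_{\cdot}$. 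Skew-symmetrizing in $f,a$ exposes $F_{fa}:=\nabla_f\lambda_a-\nabla_a\lambda_f$ against $T$-components; a contraction (e.g.\ tracing on $b,e$ to obtain the traceless Ricci tensor) shows that at any point where $T\neq 0$ one must have $F=0$. Where $T\equiv 0$, the manifold has constant curvature, giving the dichotomy.

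For part 2, differentiating the key relation by $\nabla_b$ and skew-symmetrizing in $a,b$, using $[\nabla_a,\nabla_b]R=0$, yields
\[
n(n-1)\bigl(\nabla_a\mu_b-\nabla_b\mu_a\bigr) = R\,F_{ab} + \lambda_b\nabla_a R - \lambda_a\nabla_b R.
\]
An independent relation comes from computing $[\nabla_a,\nabla_f]R_{bcd}{}^e$ in two ways --- directly via the Ricci identity, and via two applications of (\ref{grm}) combined with the above --- which after eliminating the $\nabla\mu-\nabla\mu$ terms produces
\[
F_{af}\,T_{bcd}{}^e = -R_{afb}{}^m R_{mcd}{}^e - R_{afc}{}^m R_{bmd}{}^e - R_{afd}{}^m R_{bcm}{}^e + R_{afm}{}^e R_{bcd}{}^m,
\]
with $T$ as above (but now $R$ a function). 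Tracing produces $F_{af}(R_{bd}-\tfrac{R}{n}g_{bd})$ equal to a purely quadratic $R\otimes R$ expression; contracting in the direction $\nabla^a R$, which is nonzero by hypothesis, and using the contracted second Bianchi identity to kill the right-hand side then forces $F_{ab}=0$.

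The main obstacle in both cases is the final tensorial step: confirming that the residual identity admits only $F=0$ as its solution under the nondegeneracy hypothesis ($T\not\equiv 0$ in part 1; $\nabla R\not\equiv 0$ in part 2). These steps hinge on systematically exploiting the Riemann symmetries of $T$ (resp.\ $R$) in combination with the available traces.
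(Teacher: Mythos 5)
Your preliminary reductions coincide with the paper's: contracting (\ref{grm}) on $c=e$ and tracing gives $n(n-1)\mu_a = R\lambda_a-\nabla_a R$, hence the collinearity of $\lambda$ and $\mu$ when $R$ is constant, and the concircular tensor $\tilde C$ (your $T$) is indeed the right object, with $[\nabla_a,\nabla_f]R_{bcd}{}^e = F_{af}\tilde C_{bcd}{}^e$ in both cases. But the heart of the lemma --- the dichotomy ``$F=0$ or $\tilde C=0$'' --- is precisely the step you defer as ``the main obstacle,'' and the hints you give for it do not go through. In part 1, the relation $(\nabla_f\lambda_a)T_{bcd}{}^e+(\nabla_f\lambda_b)T_{cad}{}^e+(\nabla_f\lambda_c)T_{abd}{}^e=0$ constrains the full gradient $\nabla\lambda$ against $T$; skew-symmetrizing in $f,a$ leaves cross terms such as $(\nabla_f\lambda_b)T_{cad}{}^e-(\nabla_a\lambda_b)T_{cfd}{}^e$ that no natural trace removes, so $F_{fa}$ is never isolated. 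In part 2, the identity $F_{af}T_{bcd}{}^e=-R_{afb}{}^mR_{mcd}{}^e-\cdots$ carries quadratic curvature terms that are purely algebraic; the contracted second Bianchi identity $\nabla^aR_{ab}=\tfrac12\nabla_bR$ is a differential statement and cannot ``kill'' them, and even if the right side vanished, contracting with $\nabla^aR$ would only give $(\nabla^aR)F_{af}$ annihilating the traceless Ricci tensor, which is far from $F=0$.

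The paper closes this gap with two inputs absent from your outline. First, the double-divergence identity $\nabla_m\nabla_nR_{ab}{}^{mn}=0$ (a corollary of Lovelock's identity), applied to the recurrent structure, yields the orthogonality relation $F_{da}\tilde C_{bc}{}^{da}=0$. Second, Walker's identity (\ref{walker}) --- in which the quadratic curvature terms cancel identically --- converts $[\nabla_a,\nabla_b]R_{cdef}=F_{ab}\tilde C_{cdef}$ into the clean algebraic relation $F_{ab}\tilde C_{cdef}+F_{cd}\tilde C_{abef}+F_{ef}\tilde C_{abcd}=0$. Saturating this with $F^{ef}$ and using the orthogonality relation kills two of the three terms, leaving $(F^{ef}F_{ef})\tilde C_{abcd}=0$; saturating with $\tilde C^{cdef}$ instead gives $(\tilde C_{abcd}\tilde C^{abcd})F_{ef}=0$. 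This is what actually produces the dichotomy, and in part 2 the branch $\tilde C=0$ is excluded because constant curvature would force $R$ constant. Without these two ingredients (or substitutes of comparable strength) your argument cannot be completed as written.
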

\begin{proof}
We need some relations that easily come from eq.(\ref{grm}): a) the 
contraction $a=e$ gives 
$\nabla_a R_{bcd}{}^a = \lambda_a R_{bcd}{}^a+\mu_b g_{cd}-\mu_c g_{bd}$.
A further divergence $\nabla^d$ gives zero in the l.h.s, by eq.(\ref{divdivR}),
and the r.h.s. in few steps is evaluated as 
\begin{align}
0=\frac{1}{2}[(\nabla_d\lambda_a)-(\nabla_a\lambda_d)]R_{bc}{}^{da}
-\mu_b\lambda_c+\mu_c\lambda_b +\nabla_c\mu_b -\nabla_b\mu_c; 
\label{***}
\end{align}
b) the contraction of $c=e$ in (\ref{grm}) yields
$\nabla_a R_{bd} = \lambda_a R_{bd} - (n-1)\mu_a g_{bd}$, and 
$\nabla_a R= \lambda_a R - n(n-1)\mu_a$; c) the commutator of covariant 
derivatives on the Riemann tensor of type (\ref{grm}) is
\begin{align}
&[\nabla_a, \nabla_b] R_{cde}{}^f = (\nabla_a\lambda_b-\nabla_b \lambda_a) 
R_{cde}{}^f\notag \\ 
& + (\delta_c{}^fg_{de}-\delta_d{}^fg_{ce})
(\nabla_a\mu_b-\nabla_b\mu_a -\lambda_a\mu_b +\lambda_b\mu_a)\label{commutator}
\end{align}
From b) we conclude that, 
if $\nabla_a R=0$, $\lambda$ and $\mu $ are collinear ($R$ is a number).\\
Then, eq. (\ref{***}) simplifies to 
\begin{eqnarray}
&0=\frac{1}{2}[(\nabla_d\lambda_a)-(\nabla_a\lambda_d)]R_{bc}{}^{da}
 +\frac{R}{n(n-1)}(\nabla_c\lambda_b -\nabla_b\lambda_c)\notag\\
&\quad=\frac{1}{2}[(\nabla_d\lambda_a)-(\nabla_a\lambda_d)][R_{bc}{}^{da}
 +\frac{R}{n(n-1)}\delta_{cb}^{da}]\equiv \frac{1}{2}A_{da}\tilde C_{bc}{}^{da}
\label{**}
\end{eqnarray}
($\tilde C$ is the (2,2) concircular tensor and $\delta_{cb}^{da}=\delta^a{}_b
\delta^d{}_c - \delta^a{}_c\delta^d{}_b$). Also eq.(\ref{commutator})
simplifies,
\begin{equation}
[\nabla_a, \nabla_b] R_{cde}{}^f = 
A_{ab}\tilde C_{cde}{}^f \label{23}
\end{equation}
Walker's identity (\ref{walker}) for the Riemann tensor (\ref{grm})
yields the algebraic relation
\begin{eqnarray}
0= A_{ab}\tilde C_{cdef}+A_{cd}\tilde C_{abef}+A_{ef}\tilde C_{abcd}\label{WAB}
\end{eqnarray}
Now Walker's lemma \cite{Walker51} is invoked: it implies that either
$A_{ab}=0$ or $\tilde C_{abcd}=0$. We give a proof based on (\ref{**}):
1) Saturate in eq.(\ref{WAB}) with $A^{ef}$ and use (\ref{**}): one gets
$A^{ef}A_{ef}\tilde C_{abcd}=0 \Rightarrow \tilde C_{abcd}=0$;
2) in the same way, by saturation with $\tilde C^{cdef}$ one gets 
$\tilde C_{abcd}\tilde C^{abcd}A_{ef}=0\Rightarrow A_{ef}=0 $.
Therefore either $\lambda $ is closed or the manifold is a space of 
constant curvature.

We now discuss the case $\nabla_a R\neq 0$. Take covariant derivative
$\nabla_b$ of $\nabla_a R = \lambda_a R - n(n-1)\mu_a$, and
exchange $a$ and $b$. Then
\begin{align} 
0=A_{ab}R+n(n-1)[\lambda_a\mu_b-\lambda_b\mu_a 
-\nabla_a\mu_b +\nabla_b\mu_a] \notag
\end{align}
Enter
this in (\ref{***}),(\ref{commutator}), and get again (\ref{**}),(\ref{23}) 
where now $\tilde C\neq 0$.
The same procedure as above gives $A=0$, i.e. $\lambda $ is closed.
\end{proof}
\begin{thm}
In a Generalized Recurrent Manifold the properties (\ref{RRR}), 
(\ref{RR}) and (\ref{RRRR}) hold.
\end{thm}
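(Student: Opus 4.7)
The plan is to reduce the generalized recurrent hypothesis to Semisymmetry, after which Proposition \ref{semisymm} delivers the three desired identities simultaneously.

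First I would invoke Lemma \ref{XX}, which splits the situation into two subcases. In the constant-curvature subcase one has $R_{abcd}=\frac{R}{n(n-1)}(g_{ac}g_{bd}-g_{ad}g_{bc})$ with $R$ constant, so $\nabla R_{abcd}=0$ and the manifold is locally symmetric; the three identities then follow at once from the locally symmetric example discussed at the opening of Section 3.

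In the generic subcase the recurrence covector $\lambda$ is closed, so $A_{ab}:=\nabla_a\lambda_b-\nabla_b\lambda_a=0$ and the commutator formula (\ref{commutator}) reduces to
\[
[\nabla_a,\nabla_b]R_{cde}{}^f=B_{ab}\,(\delta_c{}^f g_{de}-\delta_d{}^f g_{ce}),
\]
where $B_{ab}:=\nabla_a\mu_b-\nabla_b\mu_a-\lambda_a\mu_b+\lambda_b\mu_a$. The step I expect to be the main obstacle is showing that $B_{ab}$ also vanishes. To accomplish this I would differentiate the scalar relation $\nabla_a R=\lambda_a R-n(n-1)\mu_a$ (item (b) of the Lemma's proof) by $\nabla_b$, antisymmetrize in $(a,b)$, and exploit that covariant derivatives commute on the scalar $R$; this should yield the compatibility identity $R\,A_{ab}=n(n-1)\,B_{ab}$, which combined with $A_{ab}=0$ forces $B_{ab}=0$ (and in the degenerate branch $R\equiv 0$ the same scalar relation already implies $\mu=0$, so $B$ vanishes directly from its definition).

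Either way $[\nabla_a,\nabla_b]R_{cde}{}^f=0$, the manifold is Semisymmetric, and Proposition \ref{semisymm} then delivers (\ref{RRR}), (\ref{RR}) and (\ref{RRRR}) in one stroke.
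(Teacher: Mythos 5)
Your argument is correct and takes essentially the same route as the paper: Lemma \ref{XX} together with the commutator formula (\ref{commutator}) reduces the generalized recurrent hypothesis to semisymmetry (or, in the constant-curvature branch, local symmetry), after which Proposition \ref{semisymm} yields the three identities. The compatibility relation $R\,A_{ab}=n(n-1)B_{ab}$ that you derive by antisymmetrizing $\nabla_b\nabla_a R$ is exactly what the paper has already packaged into eq.~(\ref{23}), $[\nabla_a,\nabla_b]R_{cde}{}^f=A_{ab}\tilde C_{cde}{}^f$, so the two proofs coincide in substance.
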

\begin{proof}
If $\nabla R\neq 0$ then, by the previous Lemma \ref{XX}, $\lambda $ 
is always closed and, by eq.(\ref{23}), the space is semisymmetric. 
Then eqs.(\ref{RRR}),(\ref{RR}) and (\ref{RRRR}) hold by Prop.\ref{semisymm}.
\\ 
If $\nabla R=0$ then $\lambda $ and $\mu $ are collinear (Lemma \ref{XX}) and
eq.(\ref{23}) holds again. The Lemma states that either $\lambda $
is closed or the space has constant curvature. In both cases the manifold
is semisymmetric and (\ref{RRR}),(\ref{RR}),(\ref{RRRR}) hold.
\end{proof}


The afore mentioned Recurrent structures are special cases of a new one, 
which we now define. It arises naturally from the invariance stated in
eq. (\ref{DdivQ}) stemming from Lovelock's identity.

\begin{defin}
A Riemannian manifold with a curvature tensor $K$ such that
eq.(\ref{divQ}) is true, is named {\em $K$-Recurrent Manifold} (KRM)
if $\nabla_a K_{bcd}{}^e =\lambda_a K_{bcd}{}^e $ where $\lambda $ is a nonzero
covector field.
\end{defin}

Therefore, KR-manifolds include known cases as Conformally-recurrent, 
Concircular-recurrent etc. (see \cite{Khan04} for a compendium).

In general, the Bianchi identity for a tensor $K$ contains a tensor source
$B$ (see Appendix for some relevant examples).
In a KRM it is $\lambda_{(a} K_{bc)d}{}^e = B_{abcd}{}^e$. When $\lambda $ is
closed, one obtains a remarkable property:

\begin{thm}\label{AA}
In a KRM with closed $\lambda $
\begin{align}
 R_{am}R_{bce}{}^m + R_{bm}R_{cae}{}^m + R_{cm}R_{abe}{}^m =\frac{1}{A}
\nabla_m B_{abce}{}^m
\end{align}
\end{thm}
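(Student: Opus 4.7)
The plan is to combine Proposition \ref{propdivQ} with the $K$-recurrency condition so that the $K$-divergence terms on the left-hand side of (\ref{DdivQ}) can be repackaged as the divergence of $B_{abce}{}^m$.

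First I would apply Proposition \ref{propdivQ} to the tensor $K$, which gives
\[
\nabla_a\nabla_m K_{bce}{}^m + \nabla_b\nabla_m K_{cae}{}^m + \nabla_c\nabla_m K_{abe}{}^m = A\bigl[R_{am}R_{bce}{}^m + R_{bm}R_{cae}{}^m + R_{cm}R_{abe}{}^m\bigr],
\]
so that it suffices to show that the left-hand side equals $\nabla_m B_{abce}{}^m$. The first key remark is that the KRM hypothesis $\nabla_a K_{bcd}{}^e=\lambda_a K_{bcd}{}^e$, contracted with the upper index in the derivative slot, yields the internal identity $\nabla_m K_{bce}{}^m=\lambda_m K_{bce}{}^m$. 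Substituting this into the left-hand side and applying the Leibniz rule produces
\[
\sum_{(abc)}\nabla_a(\lambda_m K_{bce}{}^m)=\sum_{(abc)}(\nabla_a \lambda_m)K_{bce}{}^m + \lambda_m\sum_{(abc)}\nabla_a K_{bce}{}^m.
\]

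Next I would use the KRM hypothesis a second time on $\nabla_a K_{bce}{}^m=\lambda_a K_{bce}{}^m$ in the last sum, which becomes $\lambda_m\cdot \lambda_{(a}K_{bc)e}{}^m=\lambda_m B_{abce}{}^m$ by the Bianchi-type identity stated before the theorem, $\lambda_{(a}K_{bc)d}{}^e=B_{abcd}{}^e$. For the first sum, the closedness of $\lambda$ allows me to replace $\nabla_a\lambda_m$ by $\nabla_m\lambda_a$, so that
\[
\sum_{(abc)}(\nabla_a\lambda_m)K_{bce}{}^m=\sum_{(abc)}(\nabla_m\lambda_a)K_{bce}{}^m.
\]

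Finally I would expand $\nabla_m B_{abce}{}^m=\nabla_m\bigl(\lambda_a K_{bce}{}^m + \lambda_b K_{cae}{}^m + \lambda_c K_{abe}{}^m\bigr)$ by Leibniz and use once more $\nabla_m K_{bce}{}^m=\lambda_m K_{bce}{}^m$: the result is exactly $\sum_{(abc)}(\nabla_m\lambda_a)K_{bce}{}^m+\lambda_m B_{abce}{}^m$. This matches the expression obtained above, so the left-hand side of (\ref{DdivQ}) equals $\nabla_m B_{abce}{}^m$, and dividing by $A$ gives the claim. The only subtle step is the double use of the recurrency condition together with the Bianchi identity for $K$; once the role of closedness in symmetrizing $\nabla_a\lambda_m$ is recognised, everything collapses by Leibniz.
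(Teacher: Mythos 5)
Your proof is correct and follows essentially the same route as the paper: contract the recurrency condition to get $\nabla_m K_{bce}{}^m=\lambda_m K_{bce}{}^m$, apply Leibniz and the recurrency condition again, identify $\lambda_{(a}K_{bc)e}{}^m$ with $B_{abce}{}^m$, use closedness of $\lambda$ to swap $\nabla_a\lambda_m$ with $\nabla_m\lambda_a$, and finish with Proposition \ref{propdivQ}. The only difference is presentational --- you verify the identity by expanding both sides and matching, while the paper obtains it by differentiating the contracted Bianchi identity directly --- but the ingredients and their roles are identical.
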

\begin{proof}
$\nabla_a\nabla_m K_{bcd}{}^m =(\nabla_a\lambda_m) K_{bcd}{}^m+
\lambda_m \lambda_a K_{bcd}{}^m$.
Cyclic permutation on $(abc)$ and summation
$\nabla_a\nabla_m K_{bcd}{}^m+\nabla_b\nabla_m K_{cad}{}^m + 
\nabla_c\nabla_m K_{abd}{}^m = (\nabla_a\lambda_m) K_{bcd}{}^m+
(\nabla_b\lambda_m) K_{cad}{}^m+(\nabla_c\lambda_m) K_{abd}{}^m+
\lambda_m \lambda_a K_{bcd}{}^m +\lambda_m \lambda_b K_{cad}{}^m +
\lambda_m \lambda_c K_{abd}{}^m $
Evaluate $\nabla_m$ of Bianchi identity with $e=m$:
$(\nabla_m\lambda_{(a})K_{bc)d}{}^m+ \lambda_m\lambda_{(a}K_{bc)d}{}^m
= \nabla_m B_{abcd}{}^m $.
Use closure property and Lovelock's identity to conclude.
\end{proof}

\begin{cor}
For the tensors $K= C, P,\tilde C, N, W$ listed in the Appendix, 
the theorem \ref{AA} holds with null r.h.s.
\end{cor}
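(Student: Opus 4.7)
The claim is that $\nabla_m B_{abce}{}^m=0$ for each $K\in\{C,P,\tilde C,N,W\}$; once this is established, Theorem~\ref{AA} immediately yields the vanishing right-hand side. My plan is a uniform case-by-case verification using the explicit forms from the Appendix.

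First, I would compute the Bianchi source $B_{abcd}{}^e=\nabla_{(a}K_{bc)d}{}^e$ for each tensor. Every such $K$ equals $R_{abcd}$ plus a linear combination of $g\otimes\mathrm{Ric}$-type and $R\,g\otimes g$-type terms, so the Riemann contribution drops out of $B$ by the second Bianchi identity $\nabla_{(a}R_{bc)d}{}^e=0$. What remains is a cyclic sum of products of the metric with covariant derivatives of the scalar curvature, of the Ricci tensor, or of a Cotton-type combination. The concircular tensor $\tilde C$ illustrates the cleanest instance: starting from $\tilde C_{abc}{}^e=R_{abc}{}^e-\frac{R}{n(n-1)}(\delta_b^e g_{ac}-\delta_a^e g_{bc})$, one obtains
\[
B_{abcd}{}^e=-\tfrac{1}{n(n-1)}\bigl[\nabla_a R\,(\delta_c^e g_{bd}-\delta_b^e g_{cd})+\text{cyclic in }(abc)\bigr].
\]

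Second, I would take $\nabla_m$ of $B_{abce}{}^m$. Metric compatibility slides $\nabla_m$ past all $g$-factors and acts only on the derivative pieces, producing Hessian tensors $\nabla_m\nabla_a R$ or $\nabla_m\nabla_a R_{bc}$. The cyclic antisymmetrization built into $B$ then collapses these into sums of commutators. For $\tilde C$ the computation yields exactly
\[
g_{bd}[\nabla_c,\nabla_a]R+g_{cd}[\nabla_a,\nabla_b]R+g_{ad}[\nabla_b,\nabla_c]R=0,
\]
by symmetry of the Hessian of a scalar; the conharmonic tensor $N$ reduces analogously, with only the overall coefficient changing.

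The main obstacle lies in the cases $K=C$, $K=P$, and $K=W$, where $B$ also carries Cotton-type combinations $\nabla_a R_{bc}-\nabla_b R_{ac}$. Their divergences generate $\nabla^m\nabla_a R_{bm}$-type terms that cannot be dispatched by scalar Hessian symmetry alone and must instead be reorganised using the Ricci identity, the contracted second Bianchi identity $\nabla^m R_{abcm}=\nabla_b R_{ac}-\nabla_a R_{bc}$, and Lovelock's identity~(\ref{DdivR}). I expect these manipulations, while tedious, to produce exactly the cancellations needed to drive $\nabla_m B_{abce}{}^m$ to zero uniformly across all five tensors.
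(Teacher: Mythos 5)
There is a genuine gap. Your strategy is to show that $\nabla_m B_{abce}{}^m$ vanishes identically by direct computation, and this succeeds only for $\tilde C$ (and it does: the Hessian of $R$ is symmetric, exactly as you write). For $K=P,C,N,W$ the claim that ``tedious manipulations'' will drive $\nabla_m B_{abce}{}^m$ to zero is false: as the Appendix records, in each of these cases $\nabla_m B_{abce}{}^m$ equals a nonzero constant $\mu$ times the cyclic sum $\nabla_a\nabla_p R_{bce}{}^p+\nabla_b\nabla_p R_{cae}{}^p+\nabla_c\nabla_p R_{abe}{}^p$, which by Lovelock's identity (\ref{DdivR}) is $\mu\,S$ with $S=R_{am}R_{bce}{}^m+R_{bm}R_{cae}{}^m+R_{cm}R_{abe}{}^m$. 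This $S$ is not zero on a general Riemannian manifold --- indeed its vanishing, eq.~(\ref{RR}), is precisely the nontrivial conclusion being sought --- so no amount of reorganisation via the Ricci identity or the contracted Bianchi identity will annihilate it. Your final paragraph defers exactly the step that cannot be carried out.

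The paper's argument is different and essentially self-referential: Theorem~\ref{AA} (valid under the KRM hypothesis with closed $\lambda$) asserts $S=\frac{1}{A}\nabla_m B_{abce}{}^m$, while the Appendix gives $\nabla_m B_{abce}{}^m=\mu\,S$ with $\mu\neq A$ for each of the five tensors (with $\mu=0$ for $\tilde C$). Combining the two yields $S=\frac{\mu}{A}S$, hence $\bigl(1-\frac{\mu}{A}\bigr)S=0$ and therefore $S=0$ and $\nabla_m B_{abce}{}^m=0$. In other words, the vanishing of the right-hand side is a consequence of the recurrency hypothesis together with the proportionality $\nabla_m B=\mu S$, not an identity of Riemannian geometry. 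To repair your proof you should replace the attempted direct cancellation by this comparison of the two constants $A$ and $\mu$ (noting, incidentally, that the argument requires $\mu\neq A$, which imposes mild dimensional restrictions such as $n\neq 3$ for $P$).
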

\begin{proof}
In the appendix one notes that $\nabla_m B_{abce}{}^m$ is either $0$ or a 
multiple of the l.h.s. (different from $A$).
\end{proof}

\begin{xrem} 
It is well known that \emph{Concircular Recurrency} is equivalent 
to \emph{Generalized Recurrency} \cite{Arslan09,Dubey79}.
\end{xrem}


\section{Weakly Ricci Symmetric Manifolds (WRS)$_n$ }\label{weaklysymm}
\begin{defin}
A (WRS)$_n$ is a Riemannian manifold with non-zero symmetric Ricci tensor
such that
\begin{align}
\nabla_a R_{bc} = A_a R_{bc} + B_b R_{ac} + D_c R_{ab} \label{WRS}
\end{align}
with A, B  and D are nonzero covector fields. 
\end{defin}
The manifolds were introduced by 
Tam\'assy and Binh \cite{Tamassy89}, and include the 
physically relevant Robertson-Walker space-times \cite{De02}, 
or the perfect fluid space-time \cite{De+Gosh04}. If $B=D$ the manifold is
{\em Ricci-Recurrent}. 
Most of the literature concentrates on the difference 
$B-D$, and prove that in (WRS)$_n$ that are conformally flat \cite{De03,De05} 
or quasi-conformally flat \cite{Jana07}, $B-D$ is a concircular vector.
We here show that Lovelock's identity (\ref{DdivR}) allows to discuss new
general properties of $A$, $B$, $D$.

\begin{lem}\label{A-B-D}
For $\alpha =A-B$ or $A-D$:
\begin{align} 
&R_{cb}(\nabla_d \alpha_a - \nabla_a \alpha_d) + 
R_{ca}(\nabla_b \alpha_d - \nabla_d \alpha_b) +
R_{cd}(\nabla_a \alpha_b - \nabla_b \alpha_a) \label{A-B}\\
&= R_{dm}R_{bac}{}^m + R_{bm}R_{adc}{}^m + R_{am}R_{dbc}{}^m
\notag
\end{align}
\end{lem}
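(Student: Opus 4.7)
The right-hand side of the claimed identity is already the right-hand side of Lovelock's identity (\ref{DdivR}) applied with the cyclic triple $(d,b,a)$ and free index $c$. So the lemma is equivalent to showing that
$\nabla_d\nabla_m R_{bac}{}^m + \nabla_b\nabla_m R_{adc}{}^m + \nabla_a\nabla_m R_{dbc}{}^m$ equals the stated left-hand side. The plan is to rewrite this cyclic sum in terms of $\alpha$ and the Ricci tensor using the (WRS)$_n$ relation (\ref{WRS}).

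The first ingredient is the contracted second Bianchi identity $\nabla_m R_{bac}{}^m = \nabla_a R_{bc} - \nabla_b R_{ac}$. Substituting (\ref{WRS}), the symmetry $R_{bc}=R_{cb}$ makes the $D_c$ contributions cancel and gives $\nabla_m R_{bac}{}^m = (A-B)_a R_{bc} - (A-B)_b R_{ac}$. For the variant $\alpha = A-D$, I would first use $R_{bc}=R_{cb}$ to rewrite the hypothesis as $\nabla_a R_{bc} = A_a R_{bc} + D_b R_{ac} + B_c R_{ab}$; the same contracted-Bianchi step then yields the analogous formula with $B$ replaced by $D$. In both cases one arrives at the single structural formula $\nabla_m R_{bac}{}^m = \alpha_a R_{bc} - \alpha_b R_{ac}$.

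Next, I would apply $\nabla_d$, use Leibniz, and expand $\nabla_d R_{bc}$ and $\nabla_d R_{ac}$ once more through (the appropriate form of) (\ref{WRS}). The cyclic sum on $(d,b,a)$ then naturally splits into two groups. The first group, coming from differentiating the $\alpha$ factors, reassembles into $R_{cb}(\nabla_d\alpha_a-\nabla_a\alpha_d) + R_{ca}(\nabla_b\alpha_d-\nabla_d\alpha_b) + R_{cd}(\nabla_a\alpha_b-\nabla_b\alpha_a)$, matching the stated left-hand side.

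The main obstacle is to verify that the second group, which is bilinear in $\alpha$ and Ricci, cancels identically. Collecting by the coefficients $\alpha_a$, $\alpha_b$, $\alpha_d$, each collection is a sum of $A_\cdot R_{\cdot\cdot}$, $B_\cdot R_{\cdot\cdot}$ and $D_c R_{\cdot\cdot}$ terms in which the $D_c$ (respectively $B_c$) pieces again drop by Ricci symmetry, while the $A$-and-$B$ (or $A$-and-$D$) pieces combine into $\alpha$. The three remaining contributions are of the cyclic form $\alpha_i(\alpha_j R_{kl}-\alpha_k R_{jl})$ and they cancel pairwise because $\alpha_i\alpha_j$ is symmetric in $(i,j)$ while the accompanying Ricci factors are swapped. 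The tracking of all these index contractions, rather than any substantive curvature computation, is the only real difficulty.
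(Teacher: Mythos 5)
Your proposal is correct and follows essentially the same route as the paper: both identify the right-hand side as Lovelock's identity (\ref{DdivR}) with indices $(d,b,a;c)$, derive $\nabla_m R_{bac}{}^m=\alpha_a R_{bc}-\alpha_b R_{ac}$ from the contracted Bianchi identity plus Ricci symmetry (swapping the roles of $B$ and $D$ for the $A-D$ case), take a further covariant derivative, cyclically sum, and observe that the terms bilinear in $\alpha$ and Ricci cancel pairwise. The paper organizes that last cancellation slightly more compactly by grouping the derivative-of-Ricci terms as $\alpha_a(\nabla_d R_{bc}-\nabla_b R_{dc})$ and reusing the structural formula, but this is the same computation you describe.
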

\begin{proof}
From the definition of WRS$_n$ and the second Bianchi identity 
$\nabla_mR_{bac}{}^m =\nabla_a R_{bc} - \nabla_b R_{ac}$ one gets immediately 
$\nabla_m R_{bac}{}^m = \alpha_a R_{bc}-\alpha_b R_{ac}$, with 
$\alpha = A-B$. A further covariant derivative gives
\begin{align}
\nabla_d\nabla_mR_{bac}{}^m=(\nabla_d\alpha_a) R_{bc}-(\nabla_d\alpha_b) R_{ac}
+\alpha_a\nabla_dR_{bc}-\alpha_b\nabla_dR_{ac}\nonumber
\end{align}
Summation is done on cyclic permutation of $d,b,a$:
\begin{align}
&\nabla_d\nabla_mR_{bac}{}^m+\nabla_b\nabla_mR_{adc}{}^m+
\nabla_a\nabla_mR_{dbc}{}^m = \notag \\
&(\nabla_d\alpha_a -\nabla_a\alpha_d) R_{bc}+
(\nabla_b\alpha_d -\nabla_d\alpha_b) R_{ac}+
(\nabla_a\alpha_b -\nabla_b\alpha_a) R_{dc}\notag\\
&+\alpha_a(\nabla_dR_{bc}-\nabla_b R_{dc})+
\alpha_d(\nabla_bR_{ac}-\nabla_a R_{bc})+
\alpha_b(\nabla_aR_{dc}-\nabla_d R_{ac})\notag
\end{align}
The terms with derivatives of Ricci tensors vanish because
$\nabla_d R_{bc}-\nabla_b R_{dc}=\alpha_d R_{bc}-\alpha_b R_{dc}$. Then,
by eq.(\ref{DdivR}), we obtain (\ref{A-B}). The case $\alpha=A-D$ is
proven in the same way starting from the identity
$\nabla_mR_{abc}{}^m =\nabla_c R_{ba} - \nabla_a R_{bc}$.
\end{proof}

\begin{thm}
If $\det [R^a{}_b]\neq 0$ then $B=D$. 
\end{thm}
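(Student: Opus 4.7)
The plan is to extract a purely algebraic constraint from the (WRS)$_n$ defining equation by exploiting the symmetry of the Ricci tensor, and then to use invertibility to kill the covector $B-D$. Specifically, writing
$$\nabla_a R_{bc} = A_a R_{bc} + B_b R_{ac} + D_c R_{ab}$$
and the same expression after swapping $b \leftrightarrow c$, and using $R_{bc}=R_{cb}$ (so $\nabla_a R_{bc} = \nabla_a R_{cb}$), the $A$-terms cancel and one is left with
$$(B_b-D_b)\,R_{ac} \;=\; (B_c-D_c)\,R_{ab}.$$
Setting $F_a := B_a - D_a$, this reads $F_b R_{ac} = F_c R_{ab}$ for all indices $a,b,c$.

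The second step is a standard linear-algebra argument. If $F\not\equiv 0$, pick $b_0$ with $F_{b_0}\neq 0$; specialising the previous identity at $b=b_0$ gives $R_{ac} = (F_c/F_{b_0})\,R_{a b_0}$ for all $a,c$, so every column of the matrix $[R_{ac}]$ is proportional to the single column $[R_{a b_0}]$. Hence $\operatorname{rank}[R_{ac}]\leq 1$, which for $n\geq 2$ is incompatible with $\det[R^a{}_b]\neq 0$ (equivalent to $\det[R_{ab}]\neq 0$ via the non-degenerate metric). Therefore $F\equiv 0$, i.e.\ $B=D$.

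I do not expect any real obstacle: the whole statement is an algebraic consequence of the compatibility between the WRS ansatz and the symmetry of Ricci, and does not in fact require Lemma 4.2 or Lovelock's identity. An alternative (heavier) route would be to subtract the two instances of Lemma 4.2 with $\alpha=A-B$ and $\alpha=A-D$: the right-hand sides cancel, leaving a cyclic identity in the two-form $\nabla_{[a}(B-D)_{b]}$ contracted with Ricci, from which one would again have to invoke $\det R\neq 0$ to conclude. The symmetry-based route above is shorter and conceptually cleaner, and it is the one I would write up.
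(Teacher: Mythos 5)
Your proof is correct and follows essentially the same route as the paper: both extract the identity $(B_b-D_b)R_{ac}=(B_c-D_c)R_{ab}$ from the symmetry of the Ricci tensor and then use $\det[R^a{}_b]\neq 0$ to force $B=D$. The only (inessential) difference is in the final linear-algebra step, where the paper contracts with $(R^{-1})^{ad}$ and traces to get $(1-n)(B-D)_c=0$, while you argue that $\operatorname{rank}[R_{ac}]\leq 1$ would contradict invertibility; both require $n\geq 2$ and are equally valid.
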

\begin{proof}
Case $\det R\neq 0$: because the Ricci tensor is symmetric, the antisymmetric 
part of eq.(\ref{WRS}) is: $0=(B-D)_b R_{ac}- (B-D)_c R_{ab}$. Left 
multiplication by $(R^{-1})^{ad}$ and summation on $a$ gives: 
$0=\delta^d{}_c (B-D)_b - \delta^d{}_b (B-D)_c $. Then put $d=b$ and sum: 
$0=(1-n)(B-D)_c \,\Rightarrow B=D.$
\end{proof}

\begin{xrem} If $\beta =B-D\neq 0$ then $R^a{}_b\beta^b=R\beta^a$, 
where $R$ is the nonzero scalar curvature\cite{De97}.
The validitity of Lemma \ref{A-B-D} for both $A-B$ and $A-D$ implies, 
by subtraction, an equation for $\beta $:
\begin{align} 
R_{cb}(\nabla_d \beta_a - \nabla_a \beta_d) + 
R_{ca}(\nabla_b \beta_d - \nabla_d \beta_b) +
R_{cd}(\nabla_a \beta_b - \nabla_b \beta_a) =0, 
\end{align}
and left multiplication by $\beta^c$ gives the differential identity
\begin{align} 
\beta_b(\nabla_d \beta_a - \nabla_a \beta_d) + 
\beta_a(\nabla_b \beta_d - \nabla_d \beta_b) +
\beta_d(\nabla_a \beta_b - \nabla_b \beta_a) =0. 
\end{align}
\end{xrem}

\begin{thm}\label{A-Bclosed}
In a (WRS)$_n$ manifold with nonsingular Ricci tensor, the covector
$A-B$ is closed iff 
\begin{align}
 R_{dm}R_{bac}{}^m + R_{bm}R_{adc}{}^m + R_{am}R_{dbc}{}^m =0 \label{WRS-RR}
\end{align}
\end{thm}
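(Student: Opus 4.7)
The plan is to use Lemma \ref{A-B-D} with the choice $\alpha = A-B$ as the bridge between the two conditions. Setting $F_{ab} := \nabla_a\alpha_b - \nabla_b\alpha_a$ (the exterior derivative of $\alpha$, automatically antisymmetric), that lemma can be rewritten as
\begin{align}
R_{cb}F_{ad} + R_{ca}F_{db} + R_{cd}F_{ba} = R_{dm}R_{bac}{}^m + R_{bm}R_{adc}{}^m + R_{am}R_{dbc}{}^m. \notag
\end{align}

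The forward implication is immediate: if $A-B$ is closed, then $F_{ab}\equiv 0$, the left-hand side vanishes identically, and (\ref{WRS-RR}) follows at once.

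For the converse, assume (\ref{WRS-RR}) holds. Then the right-hand side above is zero, giving the purely algebraic constraint
\begin{align}
R_{cb}F_{ad} + R_{ca}F_{db} + R_{cd}F_{ba} = 0. \notag
\end{align}
To isolate $F$, introduce the inverse Ricci tensor $S^{ab}$, characterized by $S^{ab}R_{bc}=\delta^a{}_c$ (well-defined because $R_{ab}$ is nonsingular, and symmetric since $R_{ab}$ is). Contract the above identity with $S^{cb}$: the first term yields $nF_{ad}$ since $S^{cb}R_{cb}=n$, while the second and third terms produce $\delta^b{}_aF_{db}=F_{da}$ and $\delta^b{}_dF_{ba}=F_{da}$ respectively. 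Using antisymmetry of $F$, the resulting equation collapses to $(n-2)F_{ad}=0$, so for $n\geq 3$ one concludes $F_{ab}=0$, i.e. $A-B$ is closed.

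The main subtlety is essentially bookkeeping: one must keep the antisymmetry of $F$ consistent through the contraction, and recognize that the coefficient $(n-2)$ emerging from the contraction forces $F$ to vanish in dimensions $n\geq 3$ (which is the standing assumption for (WRS)$_n$ manifolds here). No further structural input is required beyond Lemma \ref{A-B-D} and invertibility of $R_{ab}$.
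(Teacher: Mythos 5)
Your proof is correct and is essentially the paper's own argument: the forward direction is Lemma \ref{A-B-D} with $F=0$, and the converse is the same inverse-Ricci contraction (the paper multiplies by $(R^{-1})^s{}_c$ and then sets $s=b$, which is exactly your single contraction with $S^{cb}$), yielding $(n-2)F_{ad}=0$. The only blemish is a harmless overall sign in your transcription of the lemma's left-hand side (in your convention $\nabla_d\alpha_a-\nabla_a\alpha_d=F_{da}=-F_{ad}$), which does not affect the argument since both sides are only ever shown to vanish.
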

\begin{proof}
If $A-B$ (which equals $A-D$ because $\det R\neq 0$) is closed then
(\ref{WRS-RR}) holds because of the Lemma. If the r.h.s. of Lemma vanishes,
\begin{align} 
R_{cb}(\nabla_d \alpha_a - \nabla_a \alpha_d) + 
R_{ca}(\nabla_b \alpha_d - \nabla_d \alpha_b) +
R_{cd}(\nabla_a \alpha_b - \nabla_b \alpha_a) =0 \notag
\end{align}
the index $c$ is raised and multiplication is made by 
$(R^{-1})^s{}_c$:  
\begin{align} 
\delta^s{}_b(\nabla_d \alpha_a - \nabla_a \alpha_d) + 
\delta^s{}_a(\nabla_b \alpha_d - \nabla_d \alpha_b) +
\delta^s{}_d(\nabla_a \alpha_b - \nabla_b \alpha_a) =0 \notag
\end{align}
Put $s=b$ and sum: $(n-2)(\nabla_d \alpha_a - \nabla_a \alpha_d)
=0$. Then, if $n>2$, $\alpha $ is closed.
\end{proof}

(WRS)$_n$ manifolds of physical relevance that
fulfill the condition (\ref{WRS-RR}) are the conformally flat
WRS-manifolds, i.e. (WRS)$_n$ manifolds whose Weyl tensor 
(see Appendix) vanishes\cite{De02,De03}.

\begin{cor}
If a (WRS)$_n$ manifold is conformally flat and the Ricci matrix is 
nonsingular, then $A-B$ is closed.
\end{cor}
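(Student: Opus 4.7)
The plan is to chain together three results already established in the excerpt, so the argument becomes essentially a one-line application. Conformal flatness means that the Weyl tensor $C$ vanishes identically, and in the earlier discussion this is recorded as the \emph{$K$-flat} case with $K=C$, fitting into the framework of equation (\ref{divQ}) with trivial Codazzi tensor. Thus, by Proposition \ref{propNCS}, the algebraic identity (\ref{RR}) holds throughout the manifold.

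Next, I would verify that (\ref{RR}) is literally the same statement as (\ref{WRS-RR}): relabeling the cyclic triple $(a,b,c)$ of (\ref{RR}) as $(d,b,a)$ and the free index $e$ as $c$ yields exactly
\begin{align*}
R_{dm}R_{bac}{}^m + R_{bm}R_{adc}{}^m + R_{am}R_{dbc}{}^m = 0.
\end{align*}
So the hypothesis of the ``only if'' direction of Theorem \ref{A-Bclosed} is satisfied.

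Finally, since the Ricci matrix is assumed nonsingular, Theorem \ref{A-Bclosed} applies directly and gives that the covector $A-B$ is closed. This is the desired conclusion.

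There is no real obstacle: the whole point of the preceding machinery was to set up exactly this kind of immediate deduction. The only thing that needs mild attention is pointing out that conformal flatness does fall under the $K$-flat hypothesis used in Proposition \ref{propNCS} (rather than merely under some divergence condition on $C$), and that the Ricci nonsingularity hypothesis is needed precisely so that Theorem \ref{A-Bclosed} can be invoked (both because it underlies the algebraic step $R^{-1}$ in that theorem's proof and because it ensures $A-D=A-B$ via the earlier theorem forcing $B=D$).
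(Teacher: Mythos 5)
Your argument is correct and follows essentially the same route as the paper: the paper applies Proposition \ref{propdivQ} directly to the Weyl tensor (whose divergence has the form (\ref{divQ}) with $A=\tfrac{n-3}{n-2}$) to conclude that the right-hand side of (\ref{WRS-RR}) vanishes, and then invokes Theorem \ref{A-Bclosed}; your detour through Proposition \ref{propNCS} packages exactly that same computation. The only point the paper makes that you omit is the restriction $n>3$, needed so that the coefficient $\tfrac{n-3}{n-2}$ is nonzero and the vanishing of the Weyl tensor (or of its divergence) actually forces the quadratic Ricci--Riemann sum to vanish.
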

\begin{proof}
The divergence of the Weyl tensor (Appendix)
takes the form (\ref{divQ}), where the Codazzi 
tensor is $g_{ab}$. Because of the general proposition (\ref{DdivQ})
we have
\begin{align}
&\nabla_a\nabla_m C_{bdc}{}^m + \nabla_{b}\nabla_m C_{dac}{}^m+
\nabla_d\nabla_m C_{abc}{}^m\nonumber \\
&=\frac{n-3}{n-2}(
 R_{am}R_{bdc}{}^m + R_{bm}R_{dac}{}^m + R_{dm}R_{abc}{}^m ) \nonumber
\end{align}
If $n>3$ and if the Weyl tensor itself or its covariant divergence vanish,
we enter in the case of theorem \ref{A-Bclosed}
\end{proof}

\section{A wave equation for the Riemann tensor}
\begin{prop} For a Levi-Civita connection with $R_{ab}=0$,  
the contraction in \eqref{iii} 
with $g^{ab}$ yields Lichnerowicz's non 
linear wave equation (\ref{Lichnerowicz}).
\end{prop}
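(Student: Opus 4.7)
My plan is to contract identity \eqref{iii} with the inverse metric $g^{ab}$ and exploit the hypothesis $R_{ab}=0$ to strip away auxiliary terms. On the LHS, the cyclic expansion gives four second-derivative terms. The $a,b$-adjacent one is just $\nabla^e\nabla_e R_{cde}{}^f$; the term $g^{ab}\nabla_c\nabla_d R_{abe}{}^f$ vanishes at once by the first-pair antisymmetry of $R_{abe}{}^f$. In the remaining two, $g^{ab}\nabla_b\nabla_c R_{dae}{}^f$ and $g^{ab}\nabla_d\nabla_a R_{bce}{}^f$, I would swap the order of covariant derivatives so as to produce a divergence of Riemann on the inside. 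In a Ricci-flat space the twice-contracted Bianchi identity $\nabla_m R_{bce}{}^m=\nabla_b R_{ce}-\nabla_c R_{be}$ (already used in the proof of Corollary~\ref{DdivR}) makes all such divergences vanish, leaving only a single commutator $[\nabla^a,\nabla_c]R_{dae}{}^f$, which I would expand using the commutator formula displayed in the Main Theorem's proof. Of the four pieces of that commutator, the one whose Riemann factor carries a $(1,3)$-contraction produces a Ricci tensor and therefore also drops out.

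On the RHS, I would expand the cyclic sum $R_{(abc}{}^m R_{d)me}{}^f$ into its four permutations and contract each against $g^{ab}$. Two die cleanly: $g^{ab}R_{abc}{}^mR_{dme}{}^f=0$ by first-pair antisymmetry, and $g^{ab}R_{dab}{}^mR_{cme}{}^f=0$ because the first Bianchi identity gives $g^{ab}R_{dab}{}^m=-R_d{}^m$, zero in Ricci-flat space. What survives are two cyclic contributions plus the two non-cyclic pieces $-g^{ab}R_{ace}{}^mR_{bdm}{}^f$ and $+g^{ab}R_{acm}{}^fR_{bde}{}^m$, yielding four quadratic Riemann terms on the right.

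Equating the two sides and isolating $\nabla^e\nabla_e R_{cde}{}^f$, the commutator pieces from the left and the four quadratic pieces from the right pair up naturally in two ways: combinations of the form $R^a{}_{cd}{}^k(R_{ake}{}^f+R_{kae}{}^f)$ vanish outright by first-pair antisymmetry, while combinations of the form $R^a{}_{ce}{}^k(R_{dak}{}^f-R_{adk}{}^f)$ double, generating the coefficient $2$ in front of the two cross-terms of Lichnerowicz's equation. The lone surviving ``pair-squared'' term $R_{cd}{}^{am}R_{ame}{}^f$ matches the $R_{ab}{}^{ef}R_{efcd}$ piece after relabeling and pair exchange. The main obstacle is purely combinatorial: keeping the roughly ten quadratic Riemann terms straight through the expansions and applying first-pair antisymmetry, second-pair antisymmetry, and pair exchange in the right spots to bring the final answer into the precise form \eqref{Lichnerowicz}.
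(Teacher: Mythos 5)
Your proposal is correct and follows the same route as the paper, which merely notes that with $\nabla_a g_{bc}=0$, the pair symmetry $R_{abcd}=R_{cdab}$, and the vanishing of $\nabla_m R_{abc}{}^m$ in the Ricci-flat case, the contraction of \eqref{iii} with $g^{ab}$ "follows immediately"; you have simply carried out in detail the bookkeeping (the single surviving commutator, the Bianchi/antisymmetry cancellations, and the doubling of the cross terms) that the paper leaves implicit.
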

\begin{proof}
Since $\nabla_a g_{bc}=0$, indices can be lowered or raised freely under 
covariant derivation. The Riemann tensor gains the symmetry 
$R_{abcd}=R_{cdab}$ and the further condition $R_{ab}=0$ implies that 
$\nabla_k R^k{}_{abc}=0$. Eq.\ref{Lichnerowicz} follows immediately.
\end{proof}

\subsection*{Appendix}
We collect the useful formulae for the $K$-curvature tensors in a 
$n-$dimensional Riemannian manifold:
a) definition, b) divergence, c) cyclic sum of derivatives (unlike the 
II Bianchi identity, we get a nonzero tensor $B$), d) divergence of $B$
(the r.h.s. of c).\\

\noindent
\underline{Projective tensor}
\begin{eqnarray}
a)&& P_{bcd}{}^e = R_{bcd}{}^e + \frac{1}{n-1} (\delta^e{}_b R_{cd}-
\delta^e{}_c R_{bd} )\nonumber\\
b)&& \nabla_m P_{bcd}{}^m = \frac{n-2}{n-1}\nabla_m R_{bcd}{}^m
\nonumber\\
c)&&\nabla_a P_{bcd}{}^e +\nabla_b P_{cad}{}^e +\nabla_c P_{abd}{}^e=
\frac{1}{n-1}(\delta^e{}_a \nabla_p R_{bcd}{}^p + 
\delta^e{}_b \nabla_p R_{cad}{}^p + \delta^e{}_c \nabla_p R_{abd}{}^p )
\nonumber\\
d)&& \nabla_mB_{abcd}{}^m 
= \frac{1}{n-1}( 
\nabla_a\nabla_p R_{bcd}{}^p+\nabla_b\nabla_p R_{cad}{}^p
+\nabla_c\nabla_p R_{abd}{}^p)\nonumber
\end{eqnarray}
\underline{Conformal (Weyl) tensor}
\begin{eqnarray}
a)&& C_{abc}{}^d = R_{abc}{}^d +\frac{\delta_a{}^d R_{bc}-
\delta_b{}^d R_{ac} +R_a{}^d g_{bc} - R_b{}^d g_{ac}}{n-2}-
R \frac{\delta_a{}^d g_{bc} -\delta_b{}^d g_{ac} }{(n-1)(n-2)}\nonumber\\
b)&& \nabla_m C_{abc}{}^m = \frac{n-3}{n-2}\left [ \nabla_m R_{abc}{}^m +
\frac{1}{2(n-1)}(g_{bc}\nabla_a R - g_{ac}\nabla_b R)\right ]\nonumber\\
c)&&\nabla_a C_{bcd}{}^e +\nabla_b C_{cad}{}^e +\nabla_c C_{abd}{}^e=
\frac{1}{n-2}[\delta^e{}_a \nabla_p R_{bcd}{}^p + 
\delta^e{}_b \nabla_p R_{cad}{}^p + \delta^e{}_c \nabla_p R_{abd}{}^p
\nonumber\\ 
&&+g_{cd}(\nabla_a R_b{}^e - \nabla_b R_a{}^e)+g_{ad}
(\nabla_b R_c{}^e - \nabla_c R_b{}^e)+g_{bd}
(\nabla_c R_a{}^e - \nabla_a R_c{}^e)]-\frac{1}{(n-1)(n-2)}\nonumber\\
&&[\delta^e{}_a (g_{bd}\nabla_c R -g_{cd}\nabla_b R) +
\delta^e{}_b (g_{cd}\nabla_a R -g_{ad}\nabla_c R)+
\delta^e{}_c (g_{ad}\nabla_b R -g_{bd}\nabla_a R)]\nonumber\\ 
d)&& \nabla_mB_{abcd}{}^m 
= \frac{1}{n-2}( 
\nabla_a\nabla_p R_{bcd}{}^p+\nabla_b\nabla_p R_{cad}{}^p
+\nabla_c\nabla_p R_{abd}{}^p)\nonumber
\end{eqnarray}
\underline{Concircular tensor}
\begin{eqnarray}
a)&& \tilde C_{bcd}{}^e = R_{bcd}{}^e + \frac{R}{n(n-1)} (\delta^e{}_b g_{cd}-
\delta^e{}_c g_{bd} )\nonumber\\
b)&& \nabla_m \tilde C_{bcd}{}^m = \nabla_m R_{bcd}{}^m +\frac{1}{n(n-1)}
(\nabla_b R g_{cd}-\nabla_c R g_{bd})\nonumber\\
c)&& \nabla_a \tilde C_{bcd}{}^e +\nabla_b \tilde C_{cad}{}^e +
\nabla_c \tilde C_{abd}{}^e=
\frac{1}{n(n-1)}[\delta^e{}_a (\nabla_c R g_{bd}{} -\nabla_b R g_{cd})\nonumber\\
&&+\delta^e{}_b (\nabla_a R g_{cd}{} -\nabla_c R g_{ad})+
\delta^e{}_c (\nabla_b R g_{ad}{} -\nabla_a R g_{bd})]\nonumber\\
d)&& \nabla_m B_{abcd}{}^m = 0\nonumber
\end{eqnarray}
\underline{Conharmonic tensor}
\begin{eqnarray}
a)&& N_{bcd}{}^e = R_{bcd}{}^e + \frac{1}{n-2}[\delta_b{}^e R_{cd}
-\delta_c{}^e R_{bd}+ R_b{}^e g_{cd}- R_c{}^e g_{bd}]\nonumber\\
b)&& \nabla_m N_{bcd}{}^m = \frac{n-3}{n-2}\nabla_m R_{bcd}{}^m+\frac{1}{2(n-2)}
(\nabla_bR g_{cd}-\nabla_c R g_{bd})\nonumber\\
c)&& \nabla_a N_{bcd}{}^e +\nabla_b N_{cad}{}^e +\nabla_c N_{abd}{}^e =
\frac{1}{n-2}[\delta_a{}^e \nabla_p R_{bcd}{}^p+\delta_b{}^e \nabla_p R_{cad}{}^p+\delta_c{}^e \nabla_p R_{abd}{}^p\nonumber\\
&&+g_{cd}(\nabla_a R_b{}^e-\nabla R_{}^e)
+g_{ad}(\nabla_b R_c{}^e-\nabla_c R_b{}^e)
+g_{cd}(\nabla_c R_a{}^e-\nabla_aR_c{}^e)]
\nonumber\\
d)&&  \nabla_m B_{abcd}{}^m = \frac{1}{n-2}(\nabla_a\nabla_pR_{bcd}{}^p
+\nabla_b\nabla_pR_{cad}{}^p+\nabla_c\nabla_pR_{abd}{}^p)
\nonumber 
\end{eqnarray}
\underline{Quasi-conformal tensor}
\begin{eqnarray}
a)&& W_{bcd}{}^e = a \tilde C_{bcd}{}^e + b(n-2)[C_{bcd}{}^e -\tilde C_{bcd}{}^e]
\nonumber\\
b)&& \nabla_m W_{bcd}{}^m = (a+b)\nabla_m R_{bcd}{}^m + 
\frac{2a-b(n-1)(n-4)}{2n(n-1)}(\nabla_b R g_{cd} - \nabla_c R g_{bd})
\nonumber\\
c)&& \nabla_a W_{bcd}{}^e +\nabla_b W_{cad}{}^e + \nabla_c W_{abd}{}^e 
= -b(n-2)[\nabla_a C_{bcd}{}^e +\nabla_b C_{cad}{}^e+\nabla_c C_{abd}{}^e]
\nonumber\\
&&+[a+b(n-2)][\nabla_a \tilde C_{bcd}{}^e +\nabla_b \tilde C_{cad}{}^e
+\nabla_c \tilde C_{abd}{}^e]\nonumber\\
d)&& \nabla_m B_{abcd}{}^m  = -b (\nabla_a \nabla_p  R_{bcd}{}^p
+\nabla_b \nabla_p  R_{cad}{}^p + \nabla_c \nabla_p  R_{abd}{}^p) \nonumber
\end{eqnarray}

\end{document}